\newcommand{\real}{{\mathbb R}}
\newcommand{\nat}{{\mathbb N}}
\newcommand\sF{\mathcal{F}}
\newcommand\sY{\mathcal{Y}}
\newcommand{\integ}{{\mathbb Z}}
\newcommand{\posint}{{\mathbb Z}_+}
\newcommand{\E}{{\mathbb E}}
\newcommand{\bE}{{\mathbb E}}
\newtheorem{lemma}{Lemma}
\newtheorem{proposition}{Proposition}
\newtheorem{theorem}{Theorem}
\newtheorem{corollary}{Corollary}
\newtheorem{remark}{Remark}
\newtheorem{assumption}{Assumption}
\newcommand{\qed}{\mbox{\rule{.4em}{1.7ex}\hspace{.6em}}}
\newenvironment{proof}{{\bf Proof\ }}{\hspace*{.1em}\hfill\qed
\bigskip \noindent}
\begin{document}

%\begin{frontmatter}

%% Title, authors and addresses

%% use the tnoteref command within \title for footnotes;
%% use the tnotetext command for theassociated footnote;
%% use the fnref command within \author or \address for footnotes;
%% use the fntext command for theassociated footnote;
%% use the corref command within \author for corresponding author footnotes;
%% use the cortext command for theassociated footnote;
%% use the ead command for the email address,
%% and the form \ead[url] for the home page:
%% \title{Title\tnoteref{label1}}
%% \tnotetext[label1]{}
%% \author{Name\corref{cor1}\fnref{label2}}
%% \ead{email address}
%% \ead[url]{home page}
%% \fntext[label2]{}
%% \cortext[cor1]{}
%% \affiliation{organization={},
%%             addressline={},
%%             city={},
%%             postcode={},
%%             state={},
%%             country={}}
%% \fntext[label3]{}

\title{Stochastic Filtering of Reaction Networks Partially Observed in Time Snapshots}

\author[1]{Muruhan Rathinam}
\author[2]{Mingkai Yu}
\affil[1]{Department of Mathematics and Statistics, University of Maryland Baltimore County}
\affil[2]{Department of Mathematics and Statistics, University of Maryland Baltimore County}

\maketitle
\begin{abstract}
%% Text of abstract
Stochastic reaction network models arise in intracellular chemical reactions, epidemiological models and other population process models, and are a class of continuous time Markov chains which have the nonnegative integer lattice as state space.  
We consider the problem of estimating the conditional probability distribution of a stochastic reaction network 
given exact partial state observations in time snapshots. 
We propose a particle filtering method called the {\em targeting method}. Our approach takes into account that the reaction counts in between two observation snapshots satisfy linear constraints and also uses inhomogeneous Poisson processes as proposals for the reaction counts to facilitate exact interpolation. We provide rigorous analysis as well as numerical examples to illustrate our method and compare it with other alternatives.     
\end{abstract}

%%Graphical abstract
%\begin{graphicalabstract}
%\includegraphics{grabs}
%\end{graphicalabstract}

%%Research highlights
%\begin{highlights}
%\item Research highlight 1
%\item Research highlight 2
%\end{highlights}

%\begin{keyword}
%% keywords here, in the form: keyword \sep keyword
%Particle filter \sep Stochastic reaction network
%% PACS codes here, in the form: \PACS code \sep code
%\PACS 0000 \sep 1111
%% MSC codes here, in the form: \MSC code \sep code
%% or \MSC[2008] code \sep code (2000 is the default)
%\MSC[2008] 65C05 \sep 65C35
%\end{keyword}

%\end{frontmatter}

%% \linenumbers

%% main text

\section{Introduction}\label{sec-intro}
Stochastic reaction network models describe a wide array of phenomena such as intracellular biochemical reactions, epidemiological models, predator prey systems etc. These models are a particular form of continuous time Markov chains (CTMCs) $Z(t)$ that have the multidimensional nonnegative integer lattice $\posint^n$ as state space and have finitely many types of ``reaction'' events $j=1,\dots,m$ which change the state by a fixed amount $\nu_j \in \integ^n$. Given the wide applicability of these models, the problem of inferring the state as well as parameters from partial observations is of critical interest. 

In this paper, we are focused on stochastic reaction networks where some of the states, say $Y(t)$ where $Z(t)=(X(t),Y(t))$, are observed exactly at time snapshots $t_1 < t_2 < \dots < t_{T_s}$. We are interested in estimating the conditional probability
\[
\pi(t,z) = P\{Z(t) = z \, | Y(t_l)=y_l, l=1,\dots,T_s\}
\]
via a suitable filtering algorithm. More generally, our methods can be used to estimate the conditional expectation of 
a path functional given a sequence of observations.

Starting from the celebrated Kalman filter \cite{kalman1960new} for discrete time linear systems with Gaussian noise, several filtering methods have been developed for various dynamic models \cite{kalman1961new, doucet2001sequential, del2006sequential, fristedt2007filtering, golightly2006bayesian, golightly2011bayesian, calderazzo2019filtering, fang2022stochastic}. In linear and Gaussian models the conditional distributions themselves are Gaussian and are characterized by the mean and the covariance. This means that it is possible to obtain (deterministic) evolution equations for the conditional mean and covariance which makes the computations relatively straightforward. However, when the dynamic models are nonlinear or if the noise in the dynamics are non-Gaussian, the situation is no longer simple. While deterministic evolution equations may be obtained for the conditional probabilities, these equations are infinite dimensional or very high dimensional. This necessitates Monte Carlo methods. Particle filtering, also known as Sequential Monte Carlo (SMC), are usually the most 
practical approach for such problems since these methods enable recursive computations as new observations are added \cite{doucet2001sequential, del2006sequential, bain2008fundamentals}. 
Particle filters can also be used for Bayesian estimation of parameters by treating parameters as states that are constant in time, thereby providing a unified approach for state and parameter estimation. As such, our focus in this paper is on the state estimation. If the goal is only to estimate parameters, one may approach the problem in a non-Bayesian setting and several methods exist in the literature \cite{liao2015tensor, horvath2008parameter}. 

Particle filtering methods involve simulating identically distributed proxies $\tilde{Z}^{(i)}(t)$ (which we call the ``filter processes'') of the reaction network along with importance weights $W^{(i)}(t)$ for $i=1,\dots,N_s$. The fidelity of the filter estimate depends on $N_s$ as well as the variance of $W^{(i)}(t)$. Usually, the variance of $W^{(i)}(t)$ tends to increase in time, leading to poor fidelity of the estimate. Thus most methods are concerned with coming up with ways to slow down the growth of variance. 
%If the distribution of the proxy process $\tilde{Z}^{(i)}$ is the same as the conditional distribution of the partially original process $Z$, then the variance of the weights will be zero, which is ideal. Since in general it is not straightforward to generate a sample from  is not easily tractable   

We briefly describe some of the results in the literature for particle filtering of stochastic reaction networks. The work in  \cite{golightly2019efficient} considers partial state observations in discrete time with added Gaussian noise 
and proposes particle filtering methods where the proxies $\tilde{Z}^{(i)}$ are simulated according to the conditional probability of a Gaussian approximation of $Z$. \cite{fang2020stochastic, fang2022stochastic} are concerned with situations where some molecular copy numbers are very large so that a hybrid approximation of the process $Z$ may be employed and provide rigorous mathematical analysis to justify the method. The first of these is concerned with discrete time observations while the second is concerned with both discrete time as well as continuous time observations. A related work  \cite{fang2023convergence} studies the convergence of a regularized form of the particle filter. 
All of the above works are concerned with noisy observations where the observation noise is additive Gaussian even though the reaction network model is not.  

Our previous work \cite{rathinam2021state} provides a particle filter for a reaction network where exact (noiseless) partial state observations are made in continuous time. To our best knowledge this is the first such filter for a CTMC. In this paper, we provide new particle filtering methods for the case of exact partial state observations in discrete time snapshots. Since we are concerned with noiseless observations, the filters mentioned previously (which apply to observations with Gaussian noise) are not suitable. At first glance, noiseless observations may seem unrealistic. However, noisy observation processes themselves could be included in the reaction network model as additional reactions. For instance, if we observe a fluorescently tagged molecular species $S$, then the actual observations count the copy number of photons $P$ emitted by $S$. Hence this measurement process may be modeled by adding a reaction $S \rightarrow S + P$ and we consider $P$ as the observed species. 

The conceptually simplest approach to filtering for discrete time observations (with or without noise) involves the ``prediction/correction'' approach. If we denote by $\pi_{l,j}(z)$ the conditional probability distribution of $Z(t_l)$ given observations up to time point $t_j$, then one computes $\pi_{l,l}$ from $\pi_{l-1,l-1}$ in two steps. The first step is the prediction step which simply evolves the proxy process $\tilde{Z}$ according to the Markovian evolution of $Z$ to obtain weighted samples 
from $\pi_{l,l-1}$ and then a correction is performed by altering the weights appropriately so that the weighted sample reflects $\pi_{l,l}$. In the noiseless case, this approach will result in most of the weights set to zero. Even if one modifies the 
law of the proxy $\tilde{Z}$ via Girsanov changes of intensity, the probability of satisfying the observation at the next observation time $t_l$ may be negligibly small. The method we propose in this paper forces the process $\tilde{Z}$ to satisfy the next observation
and we call this the {\em targeting method}. In order to target the next observation we work with a Girsanov change of measure under which the reaction counts are inhomogeneous Poissons. We provide rigorous justification of our method and illustrate its superior performance compared to other alternatives.  

The rest of the paper is organized as follows. Section \ref{sec-RN} provides a brief review of stochastic reaction networks and states the filtering problem addressed in this paper. In Section \ref{sec-filter} we discuss the mathematical setup, motivate our method which we call the {\em targeting algorithm} as well as discuss other ideas explored in the literature. Section \ref{sec-examples} first describes the reaction network examples that are later used in Section \ref{sec-eg-discrete-time} to illustrate various choices within our method and compare with other methods. Finally, in Section \ref{sec-conclusion}, we offer some concluding remarks. 

\section{Reaction Networks and The Filtering Problem}\label{sec-RN}
We describe the widely used stochastic model of a {\em reaction network}.
A reaction network consists of $n \in \nat$ {\em species} and $m \in \nat$ {\em reaction channels}. The copy number of the species $i$ ($1 \leq i \leq n$) at time $t \geq 0$ is denoted by $Z_i(t) \in \posint$ and the number of firings of reaction channel $j$ ($1 \leq j \leq m$) during $(0,t]$ is denoted by $R_j(t) \in \posint$. The vector valued processes $X$ and $R$ are defined by $Z(t)=(Z_1(t),\dots,Z_n(t))$ and $R(t)=(R_1(t),\dots,R_m(t))$ and $Z$ is assumed to be a continuous time Markov chain (CTMC) with statespace $\posint^n$.  
The characteristics of a reaction network are uniquely defined by the {\em stoichiometric vectors} $\nu_j \in \integ^n$ and the {\em propensity functions} $a_j:\posint^n \to [0,\infty)$ for $j=1,\dots,m$. 
If the reaction channel $j$ fires at time $t$, the state $Z$ of the process is changed by $\nu_j$.   We assume that the process $Z$ is {\em cadlag}, that is, right continuous with left hand limits. We shall define the matrix $\nu \in \integ^{n \times m}$ so that its $j$th column equals $\nu_j$. The $(i,j)$th entry of $\nu$ is denoted $\nu_{ij}$ which is the $i$th entry of the column vector $\nu_j$.  Thus we may write
\begin{equation}
Z(t)=Z(0) + \nu \, R(t).    
\end{equation}
The propensity function $a_j$ has the interpretation that 
given $Z(t)=z$ (for $z \in \posint^n$), the probability that the reaction channel $j$ fires during $(t,t+h]$ is given by $a_j(z) h + o(h)$ as $h \to 0+$. It follows that $R$ is an $m$-variate {\em counting process} with {\em stochastic intensity} $a(Z(t-))$ \cite{bremaud}. We assume that the processes $Z$ and $R$ are carried by a probability space $(\Omega,\sF,P)$. 

We shall be concerned with the situation where a subset of the species are observed at time snapshots $0 \leq t_1 <t_2 \dots$. We shall take the vector of observed species to consist of the last $n_2$ components of the state $Z(t)$ and denote it by $Y(t)$. The vector of the rest of the unobserved species shall be denoted by $X(t)$. Thus we write $Z(t)=(X(t),Y(t))$ where $X(t) \in \posint^{n_1}$ and $Y(t) \in \posint^{n_2}$ with $n=n_1+n_2$. We shall write $\nu_j=(\nu_j',\nu_j'')$ where $\nu_j' \in \integ^{n_1}$ and $\nu_j'' \in \integ^{n_2}$. We shall define $\nu' \in \integ^{n_1 \times m}$ and $\nu'' \in \integ^{n_2 \times m}$ to consist of the first $n_1$ rows of $\nu$ and the last $n_2$ rows of $\nu$ respectively. Thus we may write
\begin{equation}
    \begin{aligned}
        X(t) &= X_0 + \nu' R(t),\\
        Y(t) &= Y_0 + \nu'' R(t),
    \end{aligned}
\end{equation}
where $Z_0=(X_0,Y_0)$ is the random initial state. 

%Following \cite{rathinam2021state} we define the set of {\em observable reactions} $\sO \subset \{1,2,\dots,m\}$ by 
%\begin{equation}
%    \sO=\{j \in \{1,\dots,m\} \, | \, \nu_j'' \neq 0\}.  
%\end{equation}
%Thus $\sO$ consists of reactions that modify the vector copy number of the observed species. We define the set of {\em unobservable species} to be the complement 
%\begin{equation}
%    \sU = \{j \in \{1,\dots,m\} \, | \, \nu_j'' = 0\}. 
%\end{equation}
%We split the reaction count vector $R(t)$ as $R(t)=(R'(t),R''(t))$ in the obvious manner. 

The filtering problem that we are concerned with is the numerical computation of conditional probabilities of the form: 
\begin{equation}
    \pi(t,z) = P\{Z(t)=z \, | \, Y(t_i)=y_i, \, i=1,\dots,T_s\}
\end{equation}
where $t \geq 0$, $z \in \posint^{n}$ and $T_s$ is the number of snapshots.  
Due to the fact that the number of states is either infinity or exceedingly large, Monte Carlo methods are the most feasible way to compute $\pi(t,z)$. In addition to the observation, we assume that we know the distribution of $Z_0=(X_0,Y_0)$. We note that $\pi(t,z)$ depends also on the observation sequence 
$\bar{y}=(y_1,\dots,y_{T_s})$ so that sometimes we write $\pi(t,z,\bar{y})$ to emphasize this dependence. See Remark \ref{rem-realistic} for some details.

In a state inference problem, it would be important to know in advance how useful our observations would be for the prediction of the state. For instance if the full process is $Z=(X,Y)$ where $X$ and $Y$ are independent, then the observation of $Y$ will not be of any use in the prediction of $X$. In that case, our method, or any other 
filtering method will simply end up providing an estimate of the unconditional distribution of $X$, 
because both the conditional distribution $\pi(t,x)$ and the unconditional distribution $p(t,x)$ will be equal. 
One measure of the difference between $\pi(t,x)$ and $p(t,x)$ is the {\em mutual information}. 
In the context of stochastic reaction networks, the literature on estimating mutual information is limited and some recent results may be found in \cite{duso2019path, moor2023dynamic}. 
If our goal is to obtain a point estimate for the unknown states vector $X(t)$ given 
an observation sequence denoted by $\bar{Y}$ (here $\bar{Y}$ is vector of snap shots $Y(t_l)$), the best estimate of $X(t)$ is the conditional expectation $\E[X(t) \, | \bar{Y}]$. The inherent uncertainty in this  
estimate is given by the conditional variance $\text{Var}(X(t) \, | \, \bar{Y})$. Both these quantities can be estimated from a filtering algorithm and can be used to obtain a confidence interval for the estimate. Then the expected conditional variance $\E( \text{Var}(X(t) \, | \bar{Y}) )$ (where the expectation is over all possible observations) provides a quantification of the error in using 
the specific observation model based on snapshots of $Y$ to predict $X(t)$. By the conditional variance formula, one obtains
\[
\text{Var}(X(t)) = \text{Var}(\E(X(t) \, | \, \bar{Y})) + \E(\text{Var}(X(t) \, | \, \bar{Y})).
\]
The larger the first term on the right the smaller the second term would be and hence the more useful it would be to observe $\bar{Y}$. A priori quantification of these terms for reaction networks could be a useful direction of research. 
A more detailed discussion in the context of continuous time observations may be found in \cite{rathinam2021state}. 

Finally, we also remark that inference of state $Z(t)$ at a much earlier time $t$ based on observations far out in the future (at times $t_i$ where $t_i \gg t$) is in general, a numerically challenging problem when particle filtering methods are used. This is due to the fact that the importance weights degenerate as time progresses and the standard remedy for this is resampling (see \cite{bain2008fundamentals, rathinam2021state} for instance). During a resampling procedure, each sample/particle in the collection is replaced by a number of ``offsprings'' which may be zero for some particles. If the time span between $t$ and $t_i$ is large, multiple resamplings would be needed and very few particles ``alive'' at time $t$ may have a `descendent' at time $t_i$,  resulting in a low fidelity estimate of the 
state $Z(t)$ given the future observation. A remedy for this may be found in the {\em backward smoothing algorithms} (see \cite{dau2023backward} for instance) which we shall not explore in this paper.

\begin{remark}\label{rem-realistic}
Based on the preceding discussions, we surmise that even though our filtering methods may be used to predict a state at a past time $t$ based on future observations $t_i$ where $t_i \gg t$, a large sample size (number of particles) would be required for a good estimate. Hence, the most useful application of our filtering methods would be for the estimation of a conditional probability of the form  
\begin{equation}\label{eq-pi-filter-gen}
\pi(t,z;(y_1,\dots,y_l)) = P\{Z(t)=z \, | \, Y(t_i)=y_i, \;\; i=1,\dots,l\}, 
\end{equation}
where $t_l - t$ is not too large compared to the time scales of the system. A precise quantification of the time scales of the system itself requires assumptions. For instance, if the system is an ergodic Markov chain, then the time scale would be the mixing time. 
\end{remark}

\section{The Filtering Algorithm}\label{sec-filter}
This section is organized as follows. In Section \ref{sec-math-setup} we provide a mathematical framework to describe filtering approaches in general. Then in Section \ref{sec-naive} we describe the most obvious approach (which we call the ``naive algorithm'') to the filtering problem at hand. In Section \ref{sec-ESS}, we discuss the important concept of ``effective sample size'' which also highlights the main drawback of the naive algorithm. In Section \ref{sec-cond-prop} we introduce the important concept of a ``conditional propensity function'', which underpins some of the existing methods in the literature. In Section \ref{sec-targeting}, we introduce the main idea of our proposed filtering algorithm which we call the {\em Targeting Algorithm}. This focuses on 
conditioning on a partial observation $Y(T)=y$ at a single snapshot in time.  Finally, in Section \ref{sec-multiple} we briefly outline how the Targeting Algorithm can be applied iteratively 
in the case of observations at multiple snapshots in time.

\subsection{Mathematical Setup}\label{sec-math-setup}
We shall be concerned with weighted Monte Carlo methods which are also known as {\em importance sampling} or {\em particle filters}. We present a mathematical description of a general  setup as follows. 
We let $(\Omega, \sF)$ be a measurable space consisting of the sample space and a collection of events. 

It is useful to think of two sets of processes: one on which observations are made and the other consists of the Monte Carlo simulations. We refer to the former as the {\em lab processes} and the latter as the {\em method processes} or {\em filter processes}. Both sets of processes are carried by $(\Omega, \sF)$. Conditioned on the observations, the lab processes and the method processes are independent. 

The lab processes consist of $Z=(X,Y)$ and $R$, where
$Z:[0,T_0] \times \Omega \to \posint^n$ and $R:[0,T_0] \times \Omega \to \posint^m$ are measurable, and $T_0>0$ is large enough so that $[0,T_0]$ contains all observation time points $t_l$.   

The method processes consist of $\tilde{Z}^{(i)}=(U^{(i)},V^{(i)}):[0,T_0] \times \Omega \to \posint^n$,  
$I^{(i)}:[0,T_0] \times \Omega \to \posint^m$ and $W^{(i)}:[0,T_0] \times \Omega \to [0,\infty)$ for $i=1,\dots,N_s$. Here $N_s$ is the {\em filter sample size} (number of particles) and the method processes for different $i$ are all identically distributed and for convenience we drop the superscript $i$ when describing them, or one may regard $\tilde{Z}=(U,V)$, $I$ and $W$ as the template method processes. The method processes $U,V$ and $I$ are proxies for $X,Y$ and $R$ respectively. The process $W$ is the weight process that assigns a degree of importance to the paths of $U,V$ and $I$. 

Often more than one probability measure on $(\Omega,\sF)$ is considered. The lab processes have the given propensities under a probability measure $P$ while under 
another probability measure $P_0$ (with $P \ll P_0$) both the lab and filter processes have a convenient description. In general, the quantity of interest 
$\pi(t,z,\bar{y})$ is given as a function of the conditional expectation of path functionals of the filter processes.
Going forward we denote the observation event sequence by $O_{\bar{y}}$. 

%:
%\[
%\pi(t,z,\bar{y}) = F(\E[G(\tilde{Z}) \, | \, O_{\bar{y}}],
%\]
%where $O_{\bar{y}}$ is the observed event sequence. 
%The precise nature 
%of this relationship shall depend on the filtering method. 

\subsection{The Naive Algorithm}\label{sec-naive}
It might be clear from the outset that the most straightforward Monte Carlo algorithm is simply based on the following prediction/correction strategy without any changes of measure. One simulates $N_s$ sample trajectories  
$\tilde{Z}^{(i)}(t)=(U^{(i)}(t),V^{(i)}(t))$ for $i=1,\dots,N_s$ according to the given reaction network characteristics via the Gillespie algorithm or a variant \cite{Gillespie77, gillespie2007stochastic, gibson2000efficient,anderson2007modified} and assigns a weight $W^{i}=1$ (accept) if and only if $V^{(i)}(t_l)=y_l$ for $l=1,\dots,T_s$ and $W^{i}=0$ (reject)  otherwise. We note that $\pi(t,z,\bar{y})$ is given by 
\[
\pi(t,z,\bar{y}) = \frac{\E[1_{\{z\}}(\tilde{Z}(t) \, W \, | \, O_{\bar{y}}]}{\E[W \, | \, O_{\bar{y}}]}
\]
and one simply estimates $\pi$ via the filter sample as
%\[
%\hat{\pi}(t,z,\bar{y}) = \frac{1}{N_s} \sum_{i=1}^{N_s} 1_{\{z\}}(\tilde{Z}^{(i)}(t)) \, W^i. 
%\]

\[
\hat{\pi}(t,z,\bar{y}) = \frac{\sum_{i=1}^{N_s} 1_{\{z\}}(\tilde{Z}^{(i)}(t)) \, W^i} {\sum_{i=1}^{N_s}  W^i}. 
\]
We note that the weights $W^i$ depend on $\bar{y}$. 

%One probability measure $P$ (under which both $Z$ and $\tilde{Z}$ have the same law determined by the given propensities) would suffice to describe this situation. 
We shall refer to this method as the {\em Naive Algorithm}. While it is conceptually simple and easy to implement, in many situations the probability of any given sequence of observation snapshots is very low and hence the ``effective sample size'' is much smaller than $N_s$. Hence in order to obtain reliable estimates one needs to simulate impractically large numbers $N_s$ of trajectories. 
However, the naive algorithm has one advantage in that it is conditionally unbiased:
\[
\E \left( \frac{1}{\sum_{i=1}^{N_s}  W^i} \sum_{i=1}^{N_s} 1_{\{z\}}(\tilde{Z}^{(i)}(t)) \, W^i \, \Big{|} \, \sum_{i=1}^{N_s} W^i \neq 0, \, O_{\bar{y}} \right) = \pi(t,z,\bar{y}). 
\]

\subsection{Effective Sample Size}\label{sec-ESS}
Often the filtering method involves estimating a quantity $c$ given by
\[
c = \frac{\E[C \, W \, | \, O_{\bar{y}}]}{\E[W | \, O_{\bar{y}}]}
\]
where $C$ is a path functional and $W$ is an associated weight. The quantity $c$ is estimated via 
the empirical mean 
\[
\hat{c} = \frac{\sum_{i=1}^{N_s} C^i W^i}{\sum_{i=1}^{N_s} W^i}
\]
from a weighted sample $(C^{i},W^i)$ for $i=1,\dots,N_s$ identically distributed like $(C,W)$.
%where $C^i$ is a real valued path functional and $W^i$ 
%is the associated weight. 
The fidelity of this estimate 
depends on the joint (conditional) distribution of $(C^i,W^i)$ for $i=1,\dots,N_s$. Law of large numbers (which applies under modest assumptions) states that for sufficiently large $N_s$, $\hat{c}$ is a good approximation of $c$. If we assume conditional independence (for different $i$), when $N_s$ is sufficiently large, under modest assumptions on moments we may use the so-called delta method \cite{billingsley2017probability} and obtain the approximation 
that $\hat{c}$ is (approximately) normally distributed  
with $\E[\hat{c} \, | \, O_{\bar{y}}] \approx c$ and 
\[
\text{Var}(\hat{c} \, | \, O_{\bar{y}}) \approx \frac{\E^2[CW \, | \, O_{\bar{y}}]}{N_s \, \E^2[W \, | \, O_{\bar{y}}]} \,  \text{Var}\left(\frac{CW}{\E[CW \, | \, O_{\bar{y}}]}-\frac{W}{\E[W \, | \, O_{\bar{y}}]}  \Big{|} O_{\bar{y}}\right).
\]
The square of the {\em relative error} $\text{RE}=\frac{\sqrt{\E[(\hat{c}-c)^2 \, | O_{\bar{y}}]}}{c}$ is then approximated by 
\[
\text{RE}^2 \approx \frac{1}{N_s}\text{Var}\left(\frac{CW}{\E[CW \, | \, O_{\bar{y}}]}-\frac{W}{\E[W \, | \, O_{\bar{y}}]}\Big{|} O_{\bar{y}} \right).
\]
The fidelity of the estimate clearly depends on the path functional $C$ and its conditional covariance with $W$ among other factors. For instance, if one is interested in estimating $\pi(t,z)$, then $C=1_{\{z\}}(\tilde{Z}(t))$, and the fidelity of the estimate varies with $z$. In order to have a measure of fidelity that is independent of the path functional $C$ and solely dependent on $W$, the quantity 
\begin{equation}
    N_e = \frac{\left(\sum_{i=1}^{N_s} W^i\right)^2}{\sum_{i=1}^{N_s} (W^i)^2}
\end{equation}
known as {\em effective sample size} has been proposed in the literature \cite{liu1996metropolized}. When $N_s$ is very large
\begin{equation}
    N_e \approx N_s \, \frac{\E^2[W | O_{\bar{y}}]}{\E[W^2 | O_{\bar{y}}]}.
\end{equation}
Thus, for the naive method, $N_e \approx N_s \, p$ where 
$p$ is the probability of the sequence of observations. 

\begin{remark}\label{rem-naive-ess}
When the molecular copy numbers are in the order of hundreds and if multiple species are observed,
the probability of any observed value will be very small and hence the effective sample size
of the naive method will be very small.
\end{remark}

\subsection{Conditional Propensity Function} \label{sec-cond-prop}
Recall that the propensity $a_j(z)$ is defined by the prescription that the conditional probability of 
one reaction event of $j$ occurs during $(t,t+h]$ conditioned on $Z(t)=z$ is given by $a_j(z) h + o(h)$ as $h \to 0+$. 
We may define the {\em conditional propensity} $b_j(z,t,t_1,y_1\dots)$ 
of the reaction channel $j$ by replacing the conditioned event $\{Z(t)=z\}$ by the event 
\[
\{Z(t)=z, Y(t_l)=y_l \;\; l=1,\dots,T_s\}
\]
in the above definition so that it includes the observations. See \cite{fearnhead2008computational, golightly2019efficient} for instance. 
Suppose that $t_{l^*-1} < t \leq t_{l^*}$. Then using the Markov property, it can be shown that 
\begin{equation}
  b_j(z,t) %= b_j(Z(t)=z\,|\, Y(T)=y_T)
  = a_j(z) \frac{P(Y(t_l)=y_l, l \geq l^* \,|\, Z(t)=z+\nu_j)}{P(Y(t_l)=y_l, l \geq l^*\,|\, Z(t)=z)},
\end{equation}
where we have suppressed the dependence on the observation sequence $(t_l, y_l)$.
%The conditional propensity will depend not only on the state $z$ at time $t$, but also the relationship between $t$ and the observation time points $t_l$ as well as the values $y_l$. 
%where this is discussed. 
%For a given observation sequence, we may suppress the dependence on the observations and denote $b_j$ by $b_j(z,t)$. 
We note that due to the explicit dependence on time $t$, the (conditional) wait time between reaction events is no longer exponential. If an explicit formula for $b_j(z,t)$ is available and the wait times have an analytically tractable formula (as we shall see in a specific example) it is indeed possible to simulate the reaction network according to these conditional propensities. 
In that event, all trajectories will satisfy the observations. 

In general, an explicit formula for $b_j$ is not available and hence it is impossible to draw samples from the conditional distribution. However, if a good approximation of $b_j$ is proposed, which we denote by $\hat{a}_j(z,t)$, then one may simulate the reaction network according to these ``proposal propensities''. Since the proposal propensities may not be exact, a weight $\tilde{L}_{\bar{y}}(T)$ given by the Girsanov change of measure that corresponds to the change in propensity from  $\hat{a}_j$ to $a_j$ (for the method process $I$) is computed (from the trajectory $\tilde{Z}$) and applied. Additionally, the indicator weight $W$ as in the naive method is applied depending on whether the observations are satisfied or not, so that the overall weight is $W \tilde{L}_{\bar{y}}(T)$. We note that the change of measure weight is observation dependent and hence the subscript $\bar{y}$ denoting the observation sequence.  

%Thus, there are three probability measures $P^{\bar{y}}_0, P^{\bar{y}}$ and $\tilde{P}^{\bar{y}}$ on $(\Omega,\sF)$, all observation dependent. Under $P^{\bar{y}}_0$, both $Z$ and $\tilde{Z}$ have the same propensities $\hat{a}_j$. 
%The simulation is carried out under a probability measure $P_0$ meaning that under $P_0$, $I_j$ have the proposal propensities $\hat{a}_j(z,t)$. 
%Under $\tilde{P}^{\bar{y}}$, given by $d \tilde{P}^{\bar{y}} = \tilde{L}_{\bar{y}}(T) dP^{\bar{y}}_0$, the process $\tilde{Z}$ has the desired propensities $a_j$. Likewise, 
%under $P^{\bar{y}}$, given by $d P^{\bar{y}} = L_{\bar{y}}(T) dP^{\bar{y}}_0$, the process $Z$ has the desired propensities $a_j$ Then, 
%suppressing the dependence of the probability measures on $\bar{y}$, we write 
%\[
%\begin{aligned}
%\pi(t,z,\bar{y}) &= P\{Z(t)=z \, |\, Y(t_l)=y_l, 1 \leq l \leq T_s\} \\
%&= \tilde{P}\{\tilde{Z}(t)=z \, |\, V(t_l)=y_l, 1 \leq l \leq T_s\} = \frac{\tilde{P}\{\tilde{Z}(t)=z, V(t_l)=y_l, 1 \leq l \leq T_s\}}{\tilde{P}\{V(t_l)=y_l, 1 \leq l \leq T_d\}}\\
%&= \frac{\tilde{\E}[1_{\{z\}}(\tilde{Z}(t)) W]}{\tilde{\E}[W]} = \frac{\E_0[1_{\{z\}}(\tilde{Z}(t)) \tilde{L}(T) W]}{\E_0[\tilde{L}(T) W]},
%\end{aligned}
%\]
%where the last line follows since $W$ is the indicator of the event $\{V(t_l)=y_l, l=1,\dots,T_s\}$.
If the proposal propensities $\hat{a}_j$ are exactly equal to the conditional propensities $b_j$, then with probability one, all trajectories $\tilde{Z}$ will satisfy the observations and 
$W \tilde{L}(T)$ will be a constant, making the effective sample size $N_s$. If $\hat{a}_j$ are close enough to $b_j$, then one expects 
the effective sample size to be close to $N_s$. This is the strategy that underlies the methods in \cite{golightly2019efficient}.

\subsection{The Targeting Problem and the Targeting Algorithm}\label{sec-targeting}

%%%%%%%New stuff%%%%
\setlength{\tabcolsep}{10pt}
\renewcommand{\arraystretch}{1.5}
\begin{table}
    \centering
    \begin{tabular}{|c|c|}
    \hline
      Original (Lab)   &  Proxy (Filter)\\
      \hline \hline
       $Z_0=(X_0,Y_0)$     &  $\tilde{Z}_0 = (U_0,V_0)$ \\
       \hline
       $R(T)=(R'(T),R''(T))$  & $K=(K',K'')$ \\
       \hline 
        $R(t)$  & $I(t)$ \\
        \hline
    \end{tabular}
    \caption{The notations for the original (lab) random variables and processes (on the left) and their filter proxies (on the right).}
    \label{tab:lab-filter}
\end{table}

The new method proposed by us is based on the solution of what we call a {\em targeting problem}, which we define first. In a targeting problem, we are given a time interval $[0, T]$, a distribution $\mu$ for the initial random state $Z_0$, and a partial final state observation value $Y(T)=y$. The goal is to compute the conditional distribution $P\{Z(t)=z \, | \, Y(T)=y\}$ where $t \in [0,T]$. 
Our approach to this targeting problem is based on two key ideas. The first is that the partial final state constraint $V(T)=y$ can be translated into linear relations among the reaction counts. The second is that for a (possibly inhomogeneous) Poisson process if the value at time $T>0$ is known, then one may simulate exact conditional realizations of its path on $[0,T]$. The Targeting Algorithm is summarized in Algorithm \ref{alg-target}. Table \ref{tab:lab-filter} summarizes the notations for the various random variables and processes. 

In our targeting approach, we fix an observation value $y$ and assume that the reaction count processes $R_j$ are independent inhomogeneous Poissons with strictly positive  
intensities $\lambda_j(Z_0,t,y)$ (when conditioned on $Z_0$) under a probability measure $P_0$.
Throughout this paper, by a $d$-variate inhomogeneous Poisson process $J(t)$ with a given $d$-dimensional vector intensity function $\alpha(t)$, we shall mean that the components $J_i(t)$ are independent inhomogeneous Poisson processes with corresponding intensity functions $\alpha_i(t)$. Likewise, a $d$-variate Poisson random variable with $d$-dimensional mean vector shall stand for a random vector with independent components which are Poissons with the corresponding mean components.  
%This implies that the reaction count $R_j(T)$ is a Poisson random variable. 

\subsubsection{First step of the Targeting Algorithm} The first  
step of the targeting method involves generating the proxies $\tilde{Z}_0$ (for the initial state $Z_0$) and $K$ (for the total reaction count vector $R(T)$ over the interval $[0,T]$), along with a weight $W_p$ so that the following holds:
\[
P_0\{Z_0=z_0, R(T)=k \, | \, Y(T)=y\} = \frac{E_0[ 1_{\{(z_0,k)\}}(\tilde{Z}_0,K) \, W_p \, | \, Y(T)=y]}{E_0[W_p \, | \, Y(T)=y]}.
\]
This is accomplished based on the fact that the observed event $\{Y(T)=y\}$ can be equivalently represented by
a linear relationship among the components of reaction count vector $R(T)$. In fact, after reordering and splitting the reaction count vector $R(T)=(R'(T),R''(T))$ along with possible removal of dependent observable species (corresponding to dependent rows of $\nu''$), we may write the modified $\nu''$ as 
\begin{equation}
    \nu'' = [A \; \; B]
\end{equation}
where $B$ is invertible. 
This yields
\[
y=Y_0 + A R'(T) + B R''(T)
\]
and hence 
\begin{equation}\label{eq-Rpp}
R''(T) = B^{-1} (y-Y_0-A R'(T)).
\end{equation}
Thus, we see that $R''(T)$ is determined by $Y_0, y$ and $R'(T)$. We note that the matrix $B$ is $n_2 \times n_2$ where $n_2$ is the effective number of observed species. It follows that the vector $R''(T)$ is of size $m_2=n_2$ while $R'(T)$ is of size $m_1=m-m_2$. We shall make the proxies $\tilde{Z}_0=(U_0,V_0)$ and $K$ satisfy this
condition. In particular, we shall require 
\begin{equation}\label{eq-Ipp}
K'' = B^{-1} (y-V_0-A K'),
\end{equation}
here $K'$ and $K''$ are proxies for $R'(T)$ and $R''(T)$ respectively.
If we choose $K''$ according to \eqref{eq-Ipp} we automatically satisfy the target observation. 
%$\tilde{Z}_0$, $U_0$, $V_0$, $K$ and $I$ are proxies for %$Z_0$, $X_0$, $Y_0$, $R(T)$ and $R$ respectively. 
In order to correct for this assignment, we assign the weight 
\begin{equation}\label{eq-Wp}
W_p = \rho(K'',\tilde{Z}_0,y),
\end{equation}
where $\rho(k'',z_0,y)$ is the probability that 
an $m_2$-variate Poisson random variable with mean $\int_0^T \lambda''(t,z_0,y) dt$ equals $k'' \in \real^{m_2}$. Here the vector of intensities $\lambda(t,z_0,y)$ is split into $\lambda=(\lambda',\lambda'')$ in the obvious way.  
We observe that if $K''$ is not a non-negative integer vector, then $W_p=0$. We can think of this procedure as a prediction-correction step and we have traded the use of an indicator weight with this Poisson weight (see Remark \ref{rem-poisson-weight}).

This first step of the targeting algorithm, which we call the End Point Algorithm, is described in Algorithm \ref{alg-endpoint}. The first step of the end point algorithm is to generate a sample $\tilde{Z}_0=(U_0,V_0)$ according to $\mu$ (the given initial distribution) and then a Poisson vector $K'$ with mean $M'=\int_0^T \lambda(t,\tilde{Z}_0,y) dt$. Then we set $K''$ according to \eqref{eq-Ipp}. If $K''$ is not a vector of non-negative integers then we repeat this step because 
it would yield an importance weight of $W_p=0$ and there is no reason to use this pair $(\tilde{Z}_0,K)$ in the second step (interpolation) of the targeting algorithm. If $K''$ is a vector of non-negative integers, then the weight $W_p$ is computed according to \eqref{eq-Wp} and it is guaranteed to be positive by properties of the Poisson distribution.

\begin{remark}\label{rem-poisson-weight}
We note that, we have exchanged the indicator weight  discussed in the previous section with what we shall call the {\em Poisson weight} $W_p$. This is the price to be paid for targeting (that is ensuring that $\tilde{Z}$ lands in a state consistent with the observation).  
However, the Poisson weight has lower variance than the indicator weight. See Appendix A, where we obtain a lower bound for the effective sample fraction (ESF) ($\E_0^2[W_p]/\E_0[W_p^2]$) of the Poisson weight in terms of the ESF of the indicator weight. 
\end{remark}

\begin{algorithm}[ht]  
\caption{Targeting Algorithm}
\label{alg-target} 
\begin{algorithmic}[1]
  %\Function{Generate-Sample-according-to-Algorithm-0}{$x_0,(t_1,\dots,t_{N_k}),(y_0,y_1,\dots,y_{N_k}),T$}  
\State \textbf{Input:} Initial distribution $\mu(z_0)$,  $T$, filter sample size $N_s$, observation $y$, and matrices $A$, $B$ and $\nu$.
\State Make a judicious choice of piecewise-constant intensity function $\lambda(t,z_0,y)$ that is strictly positive.  
\For { $i=1$ to $N_s$ }
\State Generate $(\tilde{Z}_0^{(i)},K^{(i)})$ along with weight $W_p^{(i)}$ using Algorithm \ref{alg-endpoint}.
\State Generate trajectories $\tilde{Z}^{(i)}(t)$, $I^{(i)}(t)$ for $t \in [0,T]$, compute $\tilde{L}^{(i)}(T)$ and the path functional $C^{(i)}=\phi(Z^{(i)})$ using Algorithm \ref{alg-interpolation}.
\State $W^{(i)} \leftarrow W_p^{(i)} \tilde{L}^{(i)}(T)$.
\EndFor
\State {\bf Output:} Weighted sample $(C^{(i)}, Z^{(i)}, W^{(i)})$ for $i=1,\dots,N_s$. 
%\State \Return $(V,w)$
%\EndFunction
\end{algorithmic}
%\caption{The Targeting Algorithm}
\end{algorithm}

\begin{algorithm}[h]  
\caption{Endpoint Algorithm}
\label{alg-endpoint} 
\begin{algorithmic}[1]
  %\Function{Generate-Sample-according-to-Algorithm-0}{$x_0,(t_1,\dots,t_{N_k}),(y_0,y_1,\dots,y_{N_k}),T$}  
\State \textbf{Input:} Initial distribution $\mu(z_0)$,  $T$, observation $y$, piecewise constant intensity function $\lambda(t,z_0,y)$, matrices $A$ and $B$.
\State Generate $\tilde{Z}_0=(U_0,V_0)$ according to $\mu$. 
\State Compute $M'=\int_0^T \lambda'(t,\tilde{Z}_0,y)dt$. 
\State Generate $K'$ as a Poisson vector with mean $M'$ (independent components).
\State Set $K''=B^{-1}(y-V_0-A \, K')$. If $K'' \notin \posint^{m_2}$ go back to Step 2.
\State Set $W_p=\rho(K'',\tilde{Z}_0,y)$. 
\State {\bf Output:} $\tilde{Z}_0=(U_0,V_0)$, $K$ and $W_p>0$. 
%\State \Return $(V,w)$
%\EndFunction
\end{algorithmic}
\end{algorithm}

\begin{algorithm}[ht]  
\caption{Poisson Interpolation Algorithm} %(Piecewise constant intensity)}
\label{alg-interpolation} 
\begin{algorithmic}[1]
  %\Function{Generate-Sample-according-to-Algorithm-0}{$x_0,(t_1,\dots,t_{N_k}),(y_0,y_1,\dots,y_{N_k}),T$}  
\State \textbf{Input:} Initial state $z_0$, final time $T$, intervals $[t_{l-1},t_l]$ for $l=1,\dots,N$, total reaction count $k \in \posint^m$, intensity matrix $\overline{\lambda} \in \real^{m\times N}_+$ where $\overline{\lambda}_{jl} = \lambda_j(t)$ for  $t \in [t_{l-1}, t_l)$.
Define $\Delta t_l = t_l-t_{l-1}$. 
%where $N$ is the number of subintervals.
\State Initialize $t \leftarrow 0$, $I(t) \leftarrow 0$, $\tilde{Z}(0) \leftarrow z_0$, $\tilde{L}(T) \leftarrow 1$. %$\Delta t = \frac{T}{N}$
\State Compute $\mu_j = \sum_{l=1}^{N} \overline{\lambda}_{j,l} \, \Delta t_l$ for $j=1,...,m$.
\For{$l = 1: N$}
\State Generate reaction count $r_{j,l}$ in $[t_{l-1}, t_l)$ %that $r_{i,j}$ 
$\sim$ Bino$(k_j, \frac{\overline{\lambda}_{j,l} \,\Delta t}{\mu_j})$ for $j = 1,... m$.
\State Compute $r_l = \sum_{j=1}^m r_{j,l}$
\State Generate jump times $t_j^i$ uniformly distributed in $[t_{l-1}, t_l)$ for $j=1,...,m$, $i = 1, ..., r_{j,l}$.
\State Sort $t_j^i$ so that $t_{(1)} < t_{(2)} ...< t_{(r_l)}$, where $t_{(p)}$ is the $p$th smallest among $t^j_i$, and record the corresponding reaction index $j_{(p)}$
for $p=1,...,r_l$.
%\State $t \leftarrow t_{l-1}$.
%\State Let $t=t^{(0)}=t_{j-1}$.
\For{$p=1:r_l$}
%\State $I(s) = I(t)$, $\tilde{Z}(s)=\tilde{Z}(t)$ for $s \in [t^{(k-1)}, t^{(k)})$
\State $\tilde{L}(T) \leftarrow \tilde{L}(T) \times \exp \left( \sum_{j=1}^m (\overline{\lambda}_{j,l} - a_j(\tilde{Z}(t))) (t_{(p)}-t) \right)$
\State $t \leftarrow t_{(p)}, \;\; I_{j_{(p)}}(t) \leftarrow I_{j_{(p)}}(t) + 1, \;\; \tilde{Z}(t) \leftarrow \tilde{Z}(t) + \nu_{j_{(p)}}$.
\State $\tilde{L}(T) \leftarrow \tilde{L}(T) \times \frac{a_{j_{(p)}}(\tilde{Z}(t))}{\overline{\lambda}_{j^{(p)},l}}$
\EndFor
\State $\tilde{L}(T) \leftarrow \tilde{L}(T) \times \exp\left( \sum_{j=1}^m (\overline{\lambda}_{j,N} - a_j(\tilde{Z}(t))) (t_l-t) \right)$
\State $t \leftarrow t_l$.
%\State  $I(s) = I(t)$, $\tilde{Z}(s)=\tilde{Z}(t)$ for $s \in [t^{(k)}, t_j)$
\State $\mu_i \leftarrow \mu_i - \overline{\lambda}_{i,j} \, \Delta t$, \, $k_i \leftarrow k_i - r_{i,j}$ for $i=1,..,m$
\EndFor
%\State $\tilde{L}(T) \leftarrow \tilde{L}(T) \times \exp\left( \sum_{j=1}^m (\overline{\lambda}_{j,N} - a_j(\tilde{Z}(t))) (T-t) \right)$
\State {\bf Output:} Jump times $t \in [0,T]$, and $I(t)$, $\tilde{Z}(t)$ for jump times $t \in [0, T]$ and $\tilde{L}(T)$.
%\State \Return $(V,w)$
%\EndFunction
\end{algorithmic}
\end{algorithm}

\subsubsection{Second step of the Targeting Method} Having obtained ``end point'' proxies $\tilde{Z}_0$ and $K$ (along with weight $W_p$) for the initial state and the total reaction counts over $[0,T]$,  the second step involves generating the process $I(t)$ which is a proxy for $R(t)$ such that it
terminates at $K$, that is, $I(T)=K$. This second step is accomplished by the fact that the conditioned sample paths of an inhomogeneous Poisson process (conditioned on the value at a terminal time $T$) may be generated in a straightforward manner. Along with this interpolated processes $I(t)$, a Girsanov weight $\tilde{L}(T)$ is computed account for the change of measure between the proposed inhomogeneous intensity and the actual reaction propensities. The interpolation algorithm is described in Algorithm \ref{alg-interpolation}, which we explain below in detail.

Consider an inhomogeneous Poisson process $J(t)$ with a strictly positive intensity function 
$\alpha(t)$. By the random time change representation, it is possible to write 
$$J(t)=J_0\left(\int_0^t \alpha(s) ds\right),$$
where $J_0(t)$ is a unit rate Poisson process. Suppose we  condition on the event $J(T)=k$ (where $k \in \posint$). This is equivalent to conditioning on the event $$J_0\left(\int_0^T \alpha(s) ds\right)=k.$$ Then, the $k$ jump times of $J_0$ are i.i.d.\ uniform on the interval $[0, \int_0^T \alpha(s) ds]$. 
Thus, conditioned trajectories of $J(t)$ 
may be obtained as follows: 
\begin{itemize}
    \item Generate i.i.d.\ uniform random variables $u_1,\dots,u_k$ in $[0,1]$, sort them in increasing order and rename so that $$0 <u_1 < u_2 < \dots < u_k<1.$$ 
    \item Compute the jump times $0<v_1<v_2 < \dots <v_k<T$ of $J$ by 
    $$v_i = \eta^{-1}(u_i), \quad \text{ where } \eta(t)=\int_0^t \alpha(s) ds.$$
Since $\eta$ is strictly increasing it is invertible.
\end{itemize}

In our situation, the above strategy can be used to generate conditioned sample paths of the proxy processes $I_j(t)$ 
(conditioned on the initial state value $\tilde{Z}_0$ and on the final count value $I_j(T)=k_j$) by the use of the intensity function $\lambda_j(t,\tilde{Z}_0,y)$. The inverse function of $\eta_j(t) = \int_0^t \lambda_j(s,\tilde{Z}_0,y) ds$ is most tractable when $\lambda_j$ is piecewise constant in time. We have employed only piecewise constant intensity functions in our numerical examples and the  Poisson Interpolation Algorithm \ref{alg-interpolation} is also specialized for this case. 
However, other alternatives may be explored in the future.

For the implementation of a piecewise constant vector intensity function $\lambda(t,z_0,y)$, we consider a subdivision of $[0,T]$ into $N$ sub-intervals $[t_{l-1},t_l)$ and take $\lambda$ to be constant inside each sub-interval. Thus, the vector intensity function $\lambda$ can be described by a matrix $\overline{\lambda}_{j,l}$ for $j=1,\dots,m$ and $l=1,\dots,N$. We note that the number of jumps for the interpolated process in a sub-interval are binomial random variables. This is what we have implemented in our examples and is described in Algorithm \ref{alg-interpolation}. In Section \ref{sec-lambda-choice} we discuss in detail some possible ways to choose the piecewise constant intensity function $\lambda$. 

Since we used an inhomogeneous Poisson process for the reaction counts, we have to correct for this by an appropriate Girsanov change of measure. The change of measure is accounted for by computing a weight $\tilde{L}(T)$ which is the terminal time value of process $\tilde{L}(t)$ which can be computed along with the proxy processes $I(t)$ and $\tilde{Z}(t)$ \cite{bremaud}. 
First we note that $\tilde{Z}(t)=\tilde{Z}_0+\nu I(t)$. 
Define the processes $\tilde{L}_j:[0,T] \times \Omega \to \real$ for $j=1,\dots,m$
by
\begin{equation}
\tilde{L}_j(t) = \left(\prod_{i \geq 1} \frac{a_j(\tilde{Z}(\tilde{T}^j_i-))}{\lambda_j(\tilde{T}^j_i-,\tilde{Z}_0,y)} 1_{\{\tilde{T}^j_i \leq t\}}\right) \, \exp \left( \int_0^t (\lambda_j(s,\tilde{Z}_0,y) - a_j(\tilde{Z}(s)) \, ds \right),\\
\end{equation}
where $\tilde{T}^j_i$ for $i=1,2,\dots$ 
is the $i$th jump time of $I_j$. 
Then the process $\tilde{L}$ is given by
\begin{equation}
\tilde{L}(t) = \tilde{L}_1(t) \dots \tilde{L}_m(t).    
\end{equation}
A rigorous treatment  of the targeting algorithm is provided in Appendix B, where it is also shown  that if $P$ denotes the probability measure under which the lab process $(Z,R)$ has the correct propensities, then 
\[
P\{Z_0=z_0, R(t)=k \, | \, Y(T)=y\} = \frac{\E_0[ 1_{\{(z_0,k)\}}(\tilde{Z}_0,I(t)) \, W_p \, \tilde{L}(T) \, | Y(T)=y] }{\E_0[W_p \, \tilde{L}(T) \, | Y(T)=y]}.
\]

%In fact, in Section \ref{sec-rigorous} we show slightly stronger results. 
%The targeting algorithm is summarized in Algorithm \ref{alg-target}.}

%The indicator weight is the indicator of the event that $\tilde{Z}$ lands in the set of states consistent with the observation. The Poisson weight is the probability that 

%A similar weight $\tilde{L}_y(T)$ is applied to obtain the
%The weight $L(T)$ needs to be multiplied by $W$ to obtain the overall weight of a trajectory. 

\subsection{Multiple Observation Snapshots}\label{sec-multiple}

In Section \ref{sec-targeting} we described a (weighted) Monte Carlo algorithm 
for the {\em targeting problem}. That is, to compute $\pi(t,z)$ for $0 \leq t \leq T$ in the context of a single snapshot observation at time $T>0$ given the observation $Y(T)=y$ and initial distribution $\mu$ for $Z_0$. This algorithm produces as output an i.i.d.\ weighted sample 
$(C^{(i)},W^{(i)})$ of size $N_s$. Here, $C^{(i)}$ is a path functional which could contain for instance the values of the proxy $\tilde{Z}$ at certain time points $T_i$ in $[0,T]$ including $T$. 
In particular, the targeting algorithm over $[0, T]$ can be used to obtain an empirical distribution 
\[
\hat{\mu} = \frac{\sum_{j=1}^{N_s} W^{(i)} \, \delta_{\tilde{Z}^{(i)}(T)}}{\sum_{j=1}^{N_s} W^{(i)}},
\]
for the conditional distribution of $Z(T)$ given $Y(T)$. Here $\delta_{z}$ is the Dirac measure concentrated at $z$.

Suppose we have multiple observation snapshots: $Y(t_l)=y_l$ for $l=1,\dots,T_s$ with $0 \leq t_1 < t_2 < \dots t_{T_s}$. 
We may handle this situation by repeatedly applying the targeting algorithm between successive snapshots $t_{l-1}$ and $t_l$ as described below.

For the first span, if $t_1>0$, we set $l=1$, let $t_{l-1}=t_0=0$. Otherwise 
$t_1=0$, so we set $l=2$. Then we apply the targeting algorithm for the span $[t_{l-1},t_l]$. 
We take the initial probability measure $\mu$ as the known probability distribution of $Z_0$. If $t_1=0$, the known observation $Y(0)=y_0$ is incorporated into the knowledge of $\mu$.  

The targeting algorithm applied to a general span $[t_{l-1},t_l]$ can be used  
to obtain a weighted sample $(Z^{(i)}(t_l),W^{(i)}(t_l))$ which gives us an empirical measure $\hat{\mu}_l$ defined by
\[
\hat{\mu}_l = \frac{\sum_{j=1}^{N_s} W^{(i)}(t_l) \, \delta_{\tilde{Z}^{(i)}(t_l)}}{\sum_{j=1}^{N_s} W^{(i)}(t_l)}.
\]
We may use $\hat{\mu}_l$ as an approximation of the conditional  
probability distribution of $Z(t_l)$ given the 
observations $\{Y(t_1)=y_1, \dots, Y(t_l)=y_l\}$. This allows us to proceed to the next interval $[t_l, t_{l+1}]$ where we ideally need the knowledge of the true conditional distribution of $Z(t_l)$ given the observations $Y(t_1)=y_1,\dots,Y(t_l)=y_l$. However, we simply use $\hat{\mu}_l$ in its place as an approximation.

\section{Examples} \label{sec-examples}

In this section, we briefly describe the reaction systems that we shall use as numerical examples. The first two are monomolecular and hence have linear propensities while the third example has a bimolecular reaction making it nonlinear.
These examples are chosen so that it is possible to compute the conditional probability of interest either analytically or via deterministic computations such as ODE solvers. This way, we can verify the accuracy of our Monte Carlo methods. 
We describe how the conditional probabilities may be computed for each example. We also note that the exact conditional propensity may be computed if the transition probabilities 
$P\{Z(T)=z_T \, | Z(t)=z_t\}$ ($0 \leq t \leq T$) can be computed.  
%

%In both scenarios, we would like to estimate the conditional distribution at an intermediate time. Since the true conditional distribution is available, we could compare the accuracy of various methods in different scenarios. We choose a common observation as well as a rare one.

%Then in Example 3, we look at a reaction network with non-linear propensities. Unlike the first two examples, we could only compute the conditional distribution at the final time by solving the Kolmogorov equations, hence we estimate the distribution at the final time.  We also use this example to illustrate the two-stage algorithm. 
%\begin{equation*}
%    \text{Example 3:} \qquad S_1 \stackrel{c_1}{\longrightarrow} S_2, \quad S_2 \stackrel{c_2}{\longrightarrow} S_1, \quad S_1 + S_2 \stackrel{c_3}{\longrightarrow} S_3, \quad S_3 \stackrel{c_4}{\longrightarrow} S_1 + S_2.
%\end{equation*}

%Lastly, we study an example where we could analytically compute the conditional expectation of the unobserved species $A$ but not the conditional distribution. 
%\begin{equation*}
%     \text{Example 4:} \qquad S \stackrel{c_1}{\longrightarrow} S + A, \qquad \emptyset \stackrel{c_2}{\longrightarrow} S.
%\end{equation*}

\subsection{Example 1: Pure-death model} \label{sec-eg-1}

\begin{equation}
   S \stackrel{c}{\longrightarrow} \varnothing
\end{equation}
with the propensity $a(x) = c x$. This is the simplest example where an analytically tractable conditional propensity in addition to conditional probabilities is available. 

%Assume we have noiseless observation of the species at time $t=0$ and $t=T$, and we are interested in estimating the conditional distribution
The conditional distribution
\begin{equation}
    \pi(x_t | x_0, x_T) := P\{X(t) = x_t \,|\, X(0) = x_0, X(T)=x_T\},
\end{equation}
%The conditional distribution $\pi(x_t| x_0, x_T)$ 
is given in terms of the transition probabilities as
\begin{equation} \label{eq-cond-transition}
\begin{split}
    &\pi(x_t | x_0, x_T) %= P\{X(t) = x_t \,|\, X(0) = x_0, X(T)=x_T\} =
    %=\frac{P\{X(t) = x_t,  X(0) = x_0, X(T)=x_T\}}{P\{X(0) = x_0, X(T)=x_T\}}\\
    %&= \frac{P\{X(T) = x_T \,|\, X(0) = x_0, X(t)=x_t\} \, P\{X(t) = x_t \,|\, X(0) = x_0\}\,P\{X(0) = x_0\}}{P\{X(T) = x_T \,|\, X(0) = x_0\}P\{ X(0) = x_0\}}\\
    = \frac{P\{X(T) = x_T \,|\, X(t)=x_t\} \, P\{X(t) = x_t \,|\, X(0) = x_0\}}{P\{X(T) = x_T \,|\, X(0) = x_0\}}.
\end{split}    
\end{equation}
where transition probability from $t_1$ to $t_2$ $(t_1 \leq t_2)$ follows a binomial distribution with parameters $n=x_1$ and $p=e^{-c(t_2-t_1)}$:
\begin{equation} \label{eq-trans-bino}
    P\{X(t_2) = x_2 \,|\, X(t_1) = x_1\} = \binom{x_1}{x_2} e^{-c(t_2-t_1) x_2} (1-e^{-c(t_2-t_1)})^{x_1-x_2}.
\end{equation}

Using equations \eqref{eq-cond-transition} and \eqref{eq-trans-bino}, we can obtain that
%an analytical formula for the conditional distribution
\begin{equation}
\begin{split}
    \pi(x_t | x_0, x_T) = \binom{x_0-x_T}{x_0 - x_t} \left(\frac{e^{-ct}-e^{-cT}}{1-e^{-cT}}\right)^{x_t-x_T}\left(\frac{1-e^{-ct}}{1-e^{-cT}}\right)^{x_0-x_t}.
\end{split}
\end{equation}
Furthermore, we can also obtain the conditioned propensity as
\begin{equation} \label{eq-cond-prop-decay}
    \lambda(x_t \,|\, x_T) = a(x_t) \frac{p(x_T | X(t)=x_t-1)}{p(x_T | X(t)=x_t)}
    = \frac{c(x_t-x_T)}{1-e^{-c(T-t)}}.
\end{equation}
%We also conducted simulations using the conditional propensity function idea discussed in section \ref{sec-cond-prop}. The ``CP approx" method proposed in \cite{golightly2019efficient} uses a piecewise constant approximation of the conditional propensity function that takes constant propensity between two successive events $t_1$ and $t_2$, that is, $\lambda(s)= \lambda(x_t \,|\, y_T)$ for $s\in [t_1, t_2)$. %For that method, some simulation trajectories may not land at the desired state $y_T$ at final time $T$, and we assign weight $w = 0$. 
%This way, the wait time for the next event is exponentially distributed, makes it easier to implement. 
%In what we called the ``CP exact" method, we use  this time-dependent conditional propensity \eqref{eq-cond-prop-decay} and hence the wait times are non-exponential. 
We note that the conditional propensity depends explicitly on time $t$ and hence the waiting times between reaction events according to the conditional propensity are not exponential. 
Yet, the waiting time has an analytically tractable form. 
%Let $\tau$ be the waiting time for the first event given %$X(0)=x_0, X(T)=x_T$. Then,
%\begin{equation}
%    G(t) := P\{\tau >t\} = \left(\frac{e^{-ct}-e^{-cT}}{1-e^{-cT}}\right)^{x_0-x_T}.
%\end{equation}
%In general, suppose an event happen at time $t$, given that $X(t)=x_t, X(T)=x_T$, the waiting time $\tau$ for the next event has distribution
Let us denote the wait time at time $t$ for the next reaction to occur by $\tau$ and let 
\begin{equation}
G(s)=P\{\tau>s \, | X(t)=x_t, X(T)=x_T\}. 
\end{equation}
Then it can be shown that
\begin{equation}
    G(s) = \left(\frac{e^{-cs}-e^{-c(T-t)}}{1-e^{-c(T-t)}}\right)^{x_t-x_T}.
\end{equation}
For Monte Carlo simulation, a sample for $\tau$ can be 
obtained by 
%computing the inverse of the cumulative density function $F = 1- G$:

\begin{equation}
\tau = G^{-1}(u)
%    \tau %= F^{\leftarrow}(u) 
    %= \min \{s : F(s) \geq u\}  
%    = \min \{ s: G(s) \leq 1-u\},
\end{equation}
where $u$ is a uniform random variable in $[0,1]$ and $G^{-1}$ is the inverse of the strictly monotone function $G$.

%Suppose $X(t_1)=x_1, X(T)=x_T$, let $t_1$ and $t_2$ denote the times of two successive reaction events. Then 
%Finally we note that the Girsanov weight (likelihood ratio) over $[t_1, t_2]$ where $t_1$ and $t_2$ are successive reaction event times, is given by
%\[
%\begin{aligned}
%    L(t_2) &= L(t_1) \frac{a(x_1)}{\lambda(x_1 \,|\, x_T)} \frac{e^{\int_{t_1}^{t_2} -a(x_1) \,ds}}{e^{\int_{t_1}^{t_2} -\lambda(a(x_s \,|\, x_T) \,ds}} \\
%    &= L(t_1) \frac{x_1}{x_1-x_T}\left(1-e^{-c(T-t_2)}\right)\frac{e^{-c x_1 (t_2-t_1)}}{G(\tau)},
%\end{aligned}
%\]
%where $x_1=X(t_1)$ and $x_T=X(T)$. 

\subsection{Example 2: Reversible Reaction $S_1 \longleftrightarrow S_2$}
We consider the system 
\begin{equation}
    S_1 \stackrel{c_1}{\longrightarrow} S_2, \quad S_2 \stackrel{c_2}{\longrightarrow} S_1.
\end{equation}
Let $Z(t) = (\#S_1(t), \#S_2(t))$. Suppose the propensities take mass-action form $a_1(z)=c_1 z_1$ and $a_2(z)=c_2 z_2$.  Denote the initial state $z_0 = (z_{01}, z_{02} )$.
%and we have noiseless observation of the copy number of species $S_2$ at time $T$.

To find the evolution of the probability distribution, we consider an associated mono-molecular system \cite{rathinam2007reversible} which is described by a random walk $M(t)$ between two states. This is a continuous-time Markov chain with the rate matrix
\begin{equation}
    Q = \begin{pmatrix}
    -c_1 & c_1 \\
    c_2 & -c_2
    \end{pmatrix}.
\end{equation}
We can obtain its transition probability matrix $P(t)$ by Kolmogorov's backward equation $P'(t) = Q\,P(t)$, where $P_{i,j}(t) = $ Prob$\{M(t)=j \,|\, M(0)=i\}$ for $i,j = 1, 2$. 
%For the reversible two species system, suppose the system initially starts with only one species, say $Z_1(0) = x_0$ and $Z_2(0)=0$, then we have 
%\begin{equation}
%    P\{Z_1(t)=x_t, Z_2(t)=x_0-x_t \,|\, Z_1(0) = x_0, Z_2(0)=0\} = \text{Binopdf}(x_t; x_0, P_{11}(t)).
%\end{equation}
%For a general initial condition, we could get the transition probability as well,
We note the conservation relation $Z_1(t)+Z_2(t)=z_{01}+z_{02}$. It follows that
\begin{equation} \label{eq-trans-eg2}
\begin{split}
    &P\{Z_1(t)=z_t, Z_2(t)=z_{01}+z_{02}-z_t \,|\, Z_1(0) = z_{01}, Z_2(0)= z_{02}\}\\
    &= \sum_{k=0}^{z_{01}+z_{02}}\text{Binopdf}(k; z_{01}, P_{11}(t))\text{Binopdf}(z_t-z_{01}-k; z_{02}, P_{21}(t)).
\end{split}
\end{equation}
Using equations \eqref{eq-trans-eg2} and \eqref{eq-cond-transition}, we could obtain an analytical formula for the conditional distributions.
%to compare various simulation methods with.
We also note that since the transition probability can be computed exactly from \eqref{eq-trans-eg2}, it is possible to compute the conditional propensities exactly. However, unlike in Example 1, here the wait times according to the conditional propensities are not analytically tractable. Nevertheless, one may use an approximate conditional propensity which is held constant in between jumps as in \cite{golightly2019efficient}.  

\subsection{Example 3: $S_1 \longleftrightarrow S_2, \quad S_1+S_2 \longleftrightarrow S_3$}
We consider
\begin{equation}
    S_1 \stackrel{c_1}{\longrightarrow} S_2, \quad S_2 \stackrel{c_2}{\longrightarrow} S_1, \quad S_1 + S_2 \stackrel{c_3}{\longrightarrow} S_3, \quad S_3 \stackrel{c_4}{\longrightarrow} S_1 + S_2.
\end{equation}
Let $Z(t) = (\#S_1(t), \#S_2(t), \#S_3(t))$. Suppose we observe species $S_3$, so $X(t)=(\#S_1(t), \#S_2(t))$ and $Y(t)=\#S_3(t)$. Suppose $a_1(z)=c_1 z_1$, $a_2(z)=c_2 z_2$, $a_3(z) = c_3 z_1 z_2$ and $a_4(z)=c_4 z_3$.  

%We took $X(0)=(20, 20, 20)$, $c=(0.5, 1, 0.1, 1)$, $T=1$, and applied the naive method and several variants of targeting algorithm to estimate the conditional distribution of $X(t)$. In this example, we can pick the first three reactions as ``free reactions" and the last one is determined by $R_4 = R_3 - \Delta y$. Since $Z(T)-Z(0) = \nu R(T)$ where
%\begin{equation*}
%    \nu = \begin{bmatrix}
%        -1 & 1 & -1 & 1\\
%        1 & -1 & -1 & 1\\
%        \hline
%        0 & 0 & 1 & -1\\
%    \end{bmatrix}
%\end{equation*}
%we have $\Delta Y = Y(T)- Y(0) = R_3(T)-R_4(T)$.

The conditional distribution $\pi(t,z)$ is easy to obtain if we take $t=T$. Let $Z_0 = (z_{10}, z_{20}, z_{30})$, due to the conservation relation $Z_1(t)+Z_2(t)+ 2 Z_3(t) = z_{10}+z_{20}+2z_{30}$, the state space is finite and thus 
 we can solve Kolmogorov's forward equation numerically via an ODE solver to obtain the unconditional p.m.f.\ $p(T,z)$. 
%for $p(t,z)$ in a finite state space $S = \{0, 1, ..., z_{10}+z_{20}+2z_{30}\}$
%$z \in S^3$. 
Given $Y(T)=\#S_3(T)=y$, and $p(T,z)$ for the joint distribution of $(Z_1, Z_2, Z_3)$, we can easily compute the conditional distribution $\pi(T,z)$. %conditional distribution to compare our stochastic filter results with. %Figure \ref{fig-cond-distr-two-stage-three-species} shows the estimated conditional distribution with $95\%$ confidence interval.
%, while harder to compute for intermediate time $t<T$.

\section{Numerical experiments} \label{sec-eg-discrete-time}

We shall use the three examples described in Section \ref{sec-examples} to investigate the effectiveness of the various Monte Carlo algorithms.  
We shall focus on the targeting problem. That is, we know the initial distribution $\mu_0$ which we usually take to be Dirac and we have a partial state observation $Y(T)=y$ at time $T>0$.

%In all three examples, we could compute the conditional distribution (analytically or by solving Kolmogorov forward equations).

We use the mean total variation error (TVE) to characterize the error in the estimated conditional distribution $\hat{\pi}(t,z)$: 
%from the exact conditional distribution $\pi(t,z)$: 
\begin{equation*}
    \text{TVE} = \bE\left[ \sum_z |\hat{\pi}(t,z) - \pi(t,z)| \,\Big|\, Y(T)=y\right].
\end{equation*}
We also record the effective sample fraction (ESF) defined by the ratio of effective sample size $N_e$ over the filter sample size $N_s$
\[
    \text{ESF} = \frac{N_e}{N_s} = \frac{1}{N_s} \frac{\left(\sum_{i=1}^{N_s} W^i\right)^2}{\sum_{i=1}^{N_s} (W^i)^2}.
\]
The subsections are organized so that each subsection explores a certain choice made within our targeting algorithm or compares our targeting algorithm with other methods. 

\subsection{Choice of inhomogeneous Poisson intensities $\lambda$}\label{sec-lambda-choice}
Our targeting method involves choosing inhomogeneous Poisson intensities $\lambda$ for $R(t)$. %Like what we discussed in the conditional propensities section, it's usually hard to obtain the propensity of the conditional process, but a good approximation helps make the simulation more efficient.
It is most convenient to interpolate process $I$ when $\lambda$ is piecewise constant in time. We evenly divide $[0, T]$ into $N$ subintervals $[t_j,t_{j+1}]$ with $t_{j+1}-t_j =\Delta t = \frac{T}{N}$ and let $\lambda(t) = \lambda(t_j)$ for $t \in [t_j, t_{j+1})$. We use a matrix representation for this piecewise constant intensity $\overline{\lambda} \in \real^{m\times N}_+$, where $\overline{\lambda}_{i,j} = \lambda_i(t_j)$.
Here we present two strategies for choosing the piecewise constant intensities that we used in our numerical experiments.
Since the efficiency of the Monte Carlo method depends on having a high effective sample size, we ideally want the weight $W \tilde{L}(T)$ to have low variance. Recall that $W$ is the ``Poisson weight'' that appears in the early steps of the targeting method (see Algorithm \ref{alg-target}) while $\tilde{L}(T)$ is the weight due to the Girsanov transformation. 

The first idea is to focus only on the Girsanov weight $\tilde{L}(T)$ since this appears to be more variable in our numerical experiments. Intuitively, the closer $\lambda(t)$ is to the propensity $a(Z(t))$, the better. Since $\lambda(t)$ is state independent, a natural choice is $\bE[a(Z(t))]$. If we know $\bE[a(Z(t))]$ analytically, we can construct a left endpoint approximation $\overline{\lambda}_1 (t) = \bE[a(Z(t_j))]$ for $t \in [t_j, t_{j+1)})$. 
If $\bE[a(Z(t))]$ is not analytically tractable, we could use some reasonable approximation of it. 
Examples include using the solution of the corresponding (deterministic) reaction rate equations as an approximation or, alternatively, estimate $\bE[a(Z(t))]$ via an independent forward Monte Carlo simulation. 

The above idea doesn't take the observation $y$ into account and we note that the variability in the Poisson weight $W$ is dependent on $y$. 
%Our second strategy is to incorporate $y$ into the intensity $\lambda(t)$. 
%was based upon the unconditioned $\overline{\lambda}_1(t)$ and involves a constrained optimization scheme that also takes the observation into account. 
Thus an alternative choice for the piecewise constant Poisson intensity $\overline{\lambda}_2$ is to improve the probability of the observed value $y$. So we impose the condition that the expectation of the process (as predicted by the intensity $\overline{\lambda}_2$ lands precisely at the desired state: $\nu''\int_0^T \overline{\lambda}_2(t) dt = y-y_0$, 
while keeping $\overline{\lambda}_2$ close to the average intensity $\overline{\lambda}_1$. 
To be specific, we solve the following constrained optimization problem to get intensity $\overline{\lambda}_2$:
\begin{equation} \label{eqn-propensity-optimization}
\begin{split}
    \overline{\lambda}_2 = \text{argmin}_{\lambda \in \real^{m\times N}_+} \| \lambda - \overline{\lambda}_1\|_F
    \quad  \text{ s.t. } \nu'' r = \Delta y, \text{ where } r_i = \sum_{j=1}^{N} \lambda_{i,j} \Delta t.
\end{split}
\end{equation}

Since we need the inhomogeneous Poisson intensity to be strictly positive, in order to avoid zero intensity, 
if the solution to \eqref{eqn-propensity-optimization} does not return strictly positive $\lambda_{ij}$ 
we use the optimization problem \eqref{eqn-propensity-optimization-2} instead. 
%as a lower bound the minimum (strictly) positive propensity of the reaction network. 
%When the propensities of the reaction network take the mass-action form, this minimum propensity happens to be the propensity of the state where exact one copy of each species is present. %That is $min\{a(z) \,|\, a(z)>0, z \in \posint_n\}=a(z)$.
%$a(1_n) \in \real_m$, 
The set $S$ in \eqref{eqn-propensity-optimization-2} is defined as follows. 
Let $\underline{\lambda}_i = \min\{a_i(z) \,|\, a_i(z)>0, z \in \posint^n\}$ for $i=1,\dots,m$.  
Set $S = \{\lambda \in \real^{m\times N}_+ \,|\, \lambda_{i,j} \geq \underline{\lambda}_i, \text{for \,} j=1, ..., N\}$. 
%where $ a_i = \min\{a(z) \,|\, a(z)>0, z \in \posint^n\} =a(1_n)$, $1_n$ is a $n$-dimensional vector whose entries are all one.
\begin{equation} \label{eqn-propensity-optimization-2}
\begin{split}
    \overline{\lambda}_2 = \text{argmin}_{\lambda \in S} \| \lambda - \overline{\lambda}_1\|_F
    \quad  \text{ s.t. } \nu'' r = \Delta y, \text{ where } r_i = \sum_{j=1}^{N} \lambda_{i,j} \Delta t.
\end{split}
\end{equation}
Here $\| \cdot \|_F$ stands for the Frobenius norm. In our numerical examples, we use MATLAB program \texttt{fmincon} to solve this optimization problem. 

%There are reasons that may explain why the performance of the condition propensity method wasn't as good as anticipated. that use an approximation of equation \eqref{eq-cond-prop} for the propensity of simulation process. 
%We note that in reality $a(x_t\,|\, y_T)$ changes with respect to time $t$, while the piecewise constant approximate can have non-negligible deviation from the exact one, especially when $t$ gets close to $T$. Also, which the exact conditional propensity method would also arrive at the correct final state at $T$, the piecewise constant approximate one won't necessarily guild the simulated path landing at desired state $y_T$. In fact, in the rare observation case (quantile $\alpha = 0.01)$, only $51.41\%$ of the sample trajectories did land at $y_T$, resulting a small effective sample size, while not that bad as the naive method did.

%\textbf{color}{Thoughts: Change $Z(0)$ from $(10, 0)$ to $(5, 5)$?}

\begin{table}[htbp] 
    \centering
    \begin{adjustbox}{width=1.2\textwidth,center=\textwidth}
    \begin{tabular}{|c| c | c| c | c | c| c | c | c | }
        \hline
        Method & Observation &TVE ($95\%$ interval) & ESF ($W)$ & ESF ($\tilde{L}$) &  ESF & Runtime \\
        \hline
        Naive &  $y=4$  & $0.1188, [0.1104, 0.1272]$ & N/A & N/A & $0.25$ & $0.05$\\
        Targeting $(\overline{\lambda}_1)$ & (common observation)  & $0.0722, [0.0673, 0.0771] $  & $0.95$ & $0.56$ & $0.66$ & $0.3$\\
        Targeting $(\overline{\lambda}_2)$ & $P(Y(T)=4)=0.245$  & $0.0760, [0.0702, 0.0818]$  &  $0.95$ & $0.51$ &  $0.61$  & $0.3$ \\
        CP (approx) &   & $0.0834, [0.0783, 0.0886] $ & N/A & N/A & $0.52 $ & $31.3$\\
        \hline
        Naive & $y = 7$   & $0.3646, [0.3393, 0.3900] $  & N/A & N/A & $0.03 $ & $0.05$ \\
        Targeting $(\overline{\lambda}_1)$& (rare observation) & $0.0940, [0.0880, 0.1001]$ &  $0.76$ & $0.37$ & $0.38 $ & $0.3$\\
        Targeting $(\overline{\lambda}_2)$& $P(Y(T)=7)=0.027$ & $0.0962, [0.0897, 0.1028]$ & $0.98$ & $0.21$ &  $0.37$ & $0.3$\\
        CP (approx) &  & $0.0907, [0.0851, 0.0962] $ & N/A & N/A & $0.43$ & $33.3$\\
        \hline
    \end{tabular}
    \end{adjustbox}
    \caption{Example 2. Reversible isomerization reaction $S_1 \longleftrightarrow S_2$.  We took $z_0 = (10, 0)$, $c=(1, 1.5)$, $T=1$, $t=0.7$, $\Delta t = 0.1$, while taking a common observation as well as a rare observation. We used $N_r = 100$ trials, while each trial used filter sample size $N_s = 1000$.}
    \label{tab-eg2-small}
\end{table}

The difference between the two choices $\overline{\lambda}_1$ and $\overline{\lambda}_2$ are illustrated by Examples 2, 3 and 1. 
The naive approach is also provided for comparison. In all examples, we choose a ``common observation'' for $y$, meaning a value of $y$  close to the mode of marginal distribution for $Y(T)$ and a ``rare observation'' for $y$, meaning a value of $y$ that has a significantly smaller probability compared to the common observation. 

In Example 2, $S_1 \longleftrightarrow S_2$, % we could also compute its conditional propensity function. However, it's hard to simulate the system using the exact time-dependent conditional propensity function, so we only took an approximate that assumes conditional propensity function remains constant between events. 
we picked $I_1(T)$ (corresponding to $S_1 \longrightarrow S_2$) as free reaction set $I_2 = I_1 - \Delta y$.
we first took a smaller system (small total copy number): $Z(0) = (10, 0)$. We chose parameters $c = (1, 1.5)$ and also $t=0.7$, $T=1$, $\Delta t = 0.1$. The results are shown in Table \ref{tab-eg2-small}. We also tried with a larger system with initial state $Z(0)=(100, 100)$. The results are shown Table \ref{tab-eg2-large-sys}.  
In Example 3, we took $X(0)=(20, 20, 20)$, $c=(0.5, 1, 0.1, 1)$, $t=1$, $T=1$. The results are shown in Table \ref{tab-three-species}.

\begin{table}[htbp] 
    \centering
    \begin{adjustbox}{width=1.2\textwidth,center=\textwidth}
    \begin{tabular}{|c| c | c| c | c | c|  c |c | c |}
        \hline
        Method & Observation &TVE  & ESF ($W)$ & ESF ($\tilde{L}$) & ESF &Runtime\\
         &  &($95\%$ interval) &   &  & & (seconds)\\
        \hline
        Naive &  $y=80$  & $0.1868, [0.1826, 0.1910]$& N/A & N/A  & $0.06$ & $19.50$\\
        Targeting $(\overline{\lambda}_1)$ & (common observation)  & $0.0665, [0.0646, 0.0684] $ & $0.86$ &  $0.51$ & $0.43 $ & $21.10$\\
        Targeting $(\overline{\lambda}_2)$ & $(p=0.0575)$ & $0.0626, [0.0607, 0.0646] $ & $0.88$ & $0.50$ & $0.44$ & $21.93$\\
        \hline
        Naive & $y = 98$   & $0.6764, [0.6595, 0.6933] $ & N/A & N/A & $0.004$ & $19.03$\\
        Targeting $(\overline{\lambda}_1)$& (rare observation) & $0.0928, [0.0902, 0.0954] $ & $0.62$ & $0.35$ & $0.21$ & $21.30$\\
        Targeting $(\overline{\lambda}_2)$& $(p = 0.0043)$  & $0.0755, [0.0733, 0.0776]$ & $0.87$ &  $0.35$ & $0.31$ & $21.95$\\
        \hline
    \end{tabular}
    \end{adjustbox}
    \caption{Example 2. Reversible isomerization reaction $S_1 \longleftrightarrow S_2$. We took $z_0 = (100, 100)$, $c=(1, 1.5)$, $T=2$, $t=0.7$, $\Delta t = 0.25$, while taking a common observation $y_T = 80$ as well as a rare observation $y_T =  98$. We used $N_r = 100$ trials, while each trial used filter sample size $N_s = 10,000$.}
    \label{tab-eg2-large-sys}
\end{table}

\begin{table}[htbp]
    \centering
    \begin{adjustbox}{width=\textwidth,center=\textwidth}
    \begin{tabular}{|c| c|  c | c| c | c | c | c | c| }
        \hline
        Method & Observation  &TVE & ESF $(W)$  & ESF $(\tilde{L})$ & ESF & Runtime\\
          & & ($95\%$ interval) &  & & &(seconds)\\
        \hline
        Naive  & common observation & $0.2146, [0.2117, 0.2174]$ & N/A & N/A & $0.16$ & $0.331$\\
        Targeting, $\overline{\lambda}_1$ &   $y = 24$ & $0.1695, [0.1669, 0.1722]$ & $0.83$ & $0.20$ & $0.21$ & $0.773$\\
        Targeting, $\overline{\lambda}_2$ & &  $0.1699, [0.1673, 0.1726] $ &  $0.84$ & $0.20$ & $0.21$ & $0.770$\\
        \hline
         Naive  & rare observation & $0.4347, [0.4286, 0.4408] $ & N/A & N/A & $0.04$ & $0.326$\\
        Targeting, $\overline{\lambda}_1$ & $y = 20$ & $0.2184, [0.2152, 0.2217]$ &  $0.70$ & $0.12$ & $0.14$ & $0.791$\\
        Targeting, $\overline{\lambda}_2$ & &  $0.2103, [0.2071, 0.2135]$ & $0.84$ &
        $0.14$ & $0.15$ & $0.794$\\
        \hline
    \end{tabular}
    \end{adjustbox}
    \caption{Example 3. $S_1 \longleftrightarrow S_2, S_1 + S_2 \longleftrightarrow S_3$. We took  $Z(0)=(20,20,20)$, $c=(0.5, 1, 0.1, 1)$, $t = T = 1$, $\Delta t = 0.1$. We used $N_r = 1,000$ trials with filter sample size $N_s = 1,000$.}
    \label{tab-three-species}
\end{table}

%as seen in Tables \ref{tab-eg2-small}, \ref{tab-eg2-large-sys}, \ref{tab-three-species}. 
The intensity choices $\overline{\lambda}_1$ and $\overline{\lambda}_2$ have similar performance in the common observation scenarios. In the rare observation scenarios, $\overline{\lambda}_2$ tends to have better performance. We also note that the optimization often gives a better effective sample fraction (ESF) for the Poisson weights, especially when the observation is rare. We also observe that 
bye and large $\overline{\lambda}_1$ has similar or better ESF for the Girsanov weights when compared to $\overline{\lambda}_2$. This is expected since the choice of $\overline{\lambda}_1$ is solely focused on reducing the variance of the Girsanov weight. 
It is also notable that $\overline{\lambda}_2$ has better ESF for the Poisson weights especially when rare observations are concerned. This is not surprising since the choice of $\overline{\lambda}_2$ is based on a steering towards the observation $y$ to improve the ESF of the Poisson weights. The overall ESF is also better for $\overline{\lambda}_2$ in the case of rare observations (with one exception), assuring that the constrained optimization is having the desired effect.  

Finally, in Example 1, we took $x_0 = 1000$, $c=4$, $T=0.5$, $t=0.2$, and time mesh $\Delta t = 0.02$. See Table \ref{tab-eg1}. In this example, the free dimension $m_1=0$, while the determined dimension $m_2=1$, so the Poisson weight will always be the same for different samples making ESF equal to $1$ for the Poisson weights. This explains why $\overline{\lambda}_1$ does better than $\overline{\lambda}_2$ because only the Girsanov weights matter. 
%We used inhomogeneous Poisson intensities $\lambda_1 (t) =  \bE[a(Z(t_j)]$ for $t \in [t_j, t_{j+1)})$, while $\overline{\lambda}_2$ was obtained via constrained optimization. 
%We ran each method for $N_r = 100$ independent trials, and for each trial we used the filter sample size $N_s = 1000$. We also recorded the runtime for each filter to compare computational cost. The results are shown in Table \ref{tab-eg1}. 

\begin{table}[htbp]
    \centering
    \begin{adjustbox}{width=1.2\textwidth,center=\textwidth}
    \begin{tabular}{|c| c | c| c | c | c|  c | }
        \hline
        Method & Observation & Total Variation Error  & Effective Sample  & Runtime\\
        &  &  ($95\%$ interval)  & Fraction (ESF) & (seconds)\\
        \hline
        Naive & $x_T = 368$   & $1.1353, [1.1077, 1.1630] $ & $0.027 $ & $2.3$\\
        Targeting $(\overline{\lambda}_1)$ & (common observation)  & $0.2037, [0.1995, 0.2080]$  & $0.919 $ & $2.6$\\
        Targeting $(\overline{\lambda}_2)$&  & $0.2072, [0.2029, 0.2116]$ &   $0.900$ & $2.7$\\
        %Targeting,resample $\Delta s = 0.02$, $(\bar{\lambda_1})$ &  & $0.3062, [0.2990, 0.3134]$ & $0.1805, [0.1775, 0.1835] $ & $0.993$ & $2.5$\\
        %Targeting,resample $\Delta s = 0.02$, $(\bar{\lambda_2})$&  & $ 0.2868, [0.2807, 0.2929]$ & $0.1718, [0.1694, 0.1742] $ & $0.993$ & $2.4$\\
        CP (approx) & & $0.3485, [0.3414, 0.3556]  $ & $0.322 $ & $2.4$\\
        CP (exact) &    & $0.1957, [0.1920, 0.1994] $ &  $1.000$ & $2.4$\\
        \hline

        Naive & $x_T = 404$   & $1.9036, [1.8934, 1.9139]$   &  $0.002$ & $2.4$\\
        Targeting $(\overline{\lambda}_1)$& (rare observation)  & $0.1979, [0.1942, 0.2016] $ &  $0.923$ & $2.6$\\
        Targeting $(\overline{\lambda}_2)$& & $0.2297, [0.2248, 0.2347] $  & $0.654$ & $2.4$\\
        %Targeting,resample $\Delta s = 0.02$, $(\bar{\lambda_1})$&  & $0.3255, [0.3188, 0.3322] $ & $0.1860, [0.1830, 0.1889]$ & $0.993$ & $2.2$\\
        %Targeting,resample $\Delta s = 0.02$, $(\bar{\lambda_2})$&   & $0.7153, [0.6984, 0.7323] $ & $0.3826, [0.3746, 0.3906] $   & $0.562 $ & $2.2$\\
        %Targeting,resample $\Delta s = 0.1$, $(\bar{\lambda_1})$&  & $0.2575, [0.2527, 0.2623]$ & $0.1537, [0.1514, 0.1559]$ & $0.832$ & $2.2$\\
        %Targeting,resample $\Delta s = 0.1$, $(\bar{\lambda_2})$&   & $0.5385, [0.5246, 0.5523] $ & $0.3102, [0.3032, 0.3171]$   & $0.186$ & $2.2$\\
        CP (approx) & & $0.3351, [0.3275, 0.3427] $  & $0.324 $ & $2.1$\\
        CP (exact) &    & $0.1909, [0.1872, 0.1947] $  &  $1.000$ & $2.3$\\
        \hline
    \end{tabular}
    \end{adjustbox}
    \caption{Example 1. We took $x_0 = 1000$, $c=2$, $T=0.5$, $t=0.2$, $\Delta t = 0.02$.
    We used $N_r = 100$ trials, while each trial has filter sample size $N_s = 1000$. We note that in our experiment for the rare observation case, the naive method has only $80\%$ of the trials provided meaningful results, and the rest of $20\%$ runs of the naive filter failed when none of the landed at the target state, resulting a zero denominator issue.}
    \label{tab-eg1}
\end{table}

\begin{figure}
    \centering
    \includegraphics[width=6cm]{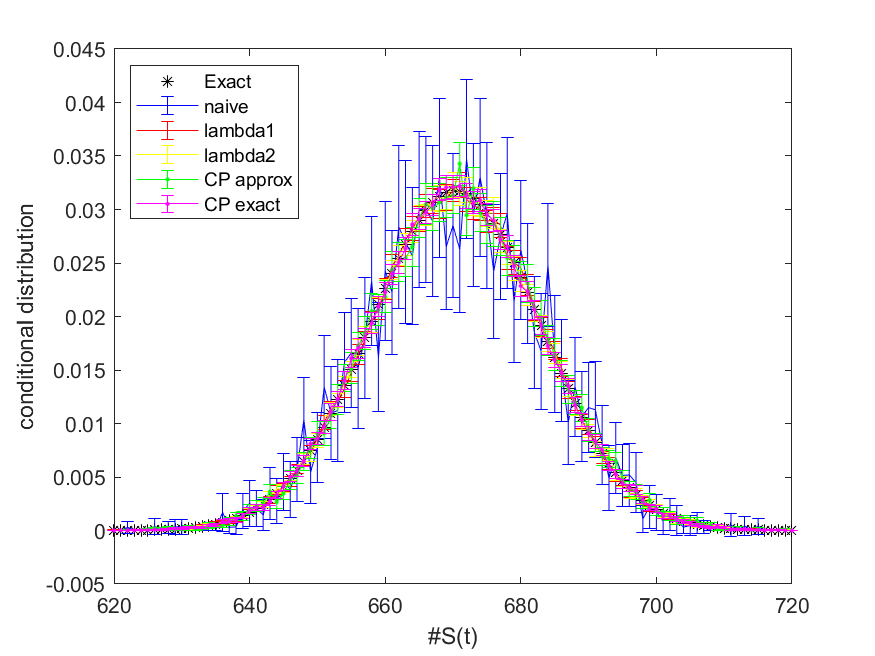}
    \includegraphics[width=6cm]{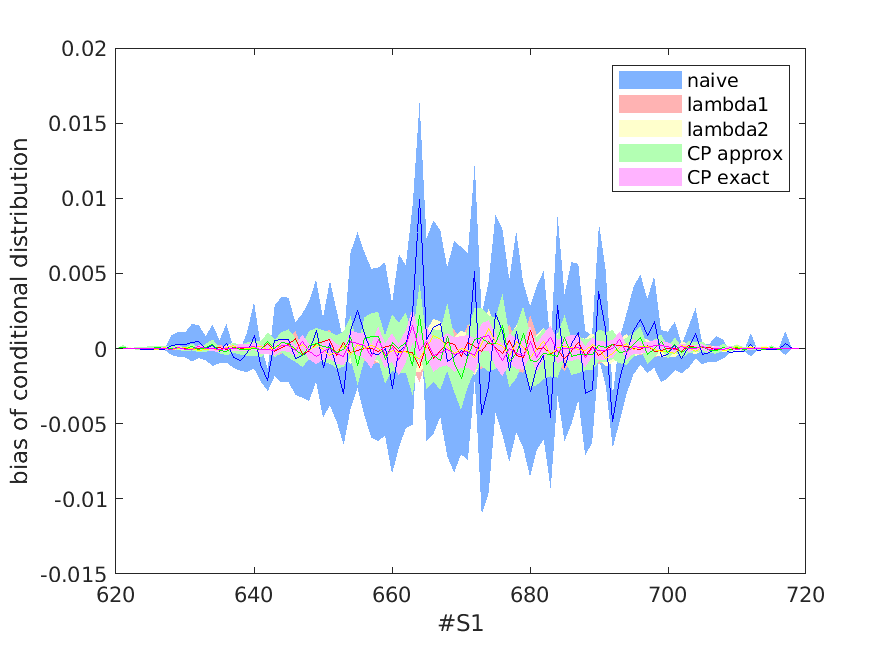}
    \caption{Example 1. Pure death example. Estimated conditional distribution compared with the actual conditional distribution. A common observation. $x_0 = 1000$, $c=2$, $T=0.5$, $t=0.2$. The estimate came with $95\%$ confident interval, which was estimated based on $N_r = 100$ trials, while each trial used filter sample size $N_s = 1000$.}
    \label{fig-cond-distr-rev2-rare}
\end{figure}

\subsection{Computational time and comparisons with the naive method}
For convenience, within each numerical experiment as depicted in the various tables, we ran the different methods with the same filter sample size. In order to make a fair comparison based on run time, one has to scale the errors by the square root of the ratio of run times or alternatively scale up the effective sample sizes ($\text{ESF} \times N_s$) by the ratio of run times.

%Compared to the targeting methods, the naive method had greater errors (mean TVE) for the same filter sample size and lower effective sample fractions (ESF) as seen from the tables. 
When the total species counts are small, the naive method is noticeably faster (by a factor of up to 6) than the targeting methods while incurring greater errors for the same filter sample size. See Tables \ref{tab-eg2-small} and \ref{tab-three-species} for instance. To make a detailed comparison, consider Table \ref{tab-eg2-small} as an example. Here, the naive method runs about 6 times faster and hence one has to scale the mean TVE errors by $1/\sqrt{6} \approx 0.4$. This makes the naive method's performance better for the common observation case whereas for the rare observation case its performance is inferior. Alternatively, if we scale the effective sample sizes  of the naive method by a factor of $6$, we reach the same conclusion.

 However, when the total copy number of the species increases, the run time differences between the naive method and the targeting methods appear to shrink (see Tables \ref{tab-eg2-large-sys} and \ref{tab-stat-distance-three-species-common-100} for instance) while the effective sample fractions of the naive method drops as expected (see Remark \ref{rem-naive-ess}). 
 We believe that one reason for the relatively slower run times of the targeting methods is due to the overhead involved in the binomial random numbers generated inside the sub-intervals and this overhead becomes negligible as the copy numbers increase because the number of reaction events within each sub-interval increases with an increase in copy numbers. 
 We also like to add that we  have limited ourselves to systems where it is feasible to compute the exact conditional distribution for purposes of comparing the errors (mean TVEs). If we consider multi-dimensional observations and modestly large copy numbers, say in the order of 100s, we expect the performance of the naive method to be very poor based on the above considerations. On the other hand, the targeting methods seem to be more robust when copy numbers are scaled or the observations are somewhat rare.

Another factor to be considered is the length of the time span $T$. When $T$ increases, the Girsanov weight $\tilde{L}(T)$ is generally expected to show greater variance and thereby significantly limiting the performance of the targeting methods. There is however, a 
remedy, which involves resampling. This is discussed in Section \ref{sec-resample} and Table \ref{tab-eg2-resample} shows that if resampling is not applied, the naive method is better than the targeting method at least in the case of a common observation even when the copy numbers are in the order of 100s. However, with resampling, the targeting method performs better than the naive method. In the case of the naive method, resampling is not helpful within the span $[0,T]$ (more generally between two successive observations) because the weight degeneracy of the naive method only occurs at the final time $T$ (more generally at the observation times). This is because the weights remain $1$ until time $T$ is reached and then may become $0$.

Yet another factor to be considered is the overhead in setting up the intensity matrix which is an added cost. This computational cost however is independent of the filter sample size and hence we have not reported it.

Thus, on the whole, we may surmise that the targeting methods (with resampling if necessary)  perform better than the naive method when the copy numbers increase and also when the observations become rarer.

\subsection{Choice of free reaction channels}
When we solve linear equations $y=Y_0 + \nu'' I(T)$ to ensure $Y(T)=y$, we need to choose a set of reactions to be free, which determines the total reaction count of the rest of the reactions. There are different options. For instance, in Example 2, $S_1 \longleftrightarrow S_2$, to solve $y-Y_0 = I_1(T) - I_2(T)$, we could choose $I_1$ to be the free reaction, then set $I_2(T) = I_1(T)-y+Y_0$. Alternatively, if we choose $I_2$ to be free, then $I_1(T) = I_2(T)-y-Y_0$. We show the results in Table \ref{tab-free-slaved} and we see that there is a noticeable difference in the performances (we use $\overline{\lambda}_2$) of these two choices. This is a topic that may be worth exploring in the future. In the case of Example 2, this can be explained by some analysis which approximates Poissons by Gaussians. In the interest of space, we do not show it here.  

\begin{table}[htbp]
    \centering
    \begin{adjustbox}{width=1.2\textwidth,center=\textwidth}
    \begin{tabular}{|c| c | c| c | c | c| c | c | }
        \hline
        Method & Observation &TVE ($95\%$ interval) & ESF ($W)$ & ESF ($\tilde{L}$) &  ESF \\
        \hline
        %Naive &  $y_T=4$  & $0.1188, [0.1104, 0.1272]$ & N/A & N/A & $0.25$ & $0.05$\\
        %Targeting $(\overline{\lambda}_1)$, $R_1$ free & common observation  & $0.0747, [0.0697, 0.0798]$  & $0.77$ & $0.34$ & $0.56$ \\
        Targeting $(\overline{\lambda}_2)$, $I_1$ free & common observation  & $0.1014, [0.0942, 0.1086]$  &  $0.77$ & $0.22$ &  $0.31$   \\
        %Targeting $(\overline{\lambda}_1)$, $R_2$ free &   & $0.0720, [0.0680, 0.0759]$  & $0.95$ & $0.55$ & $0.66$ \\
        Targeting $(\overline{\lambda}_2)$, $I_2$ free & $P(Y(T)=4)=0.245$ & $0.0978, [0.0887, 0.1070]$  &  $0.95$ & $0.32$ &  $0.39$  \\
        \hline
        %Naive & $y_T = 7$   & $0.3646, [0.3393, 0.3900] $  & N/A & N/A & $0.03 $ & $0.05$ \\
        %Targeting $(\overline{\lambda}_1)$, $R_1$ free& rare observation & $0.0940, [0.0880, 0.1001]$ &  $0.76$ & $0.37$ & $0.38 $ \\
        Targeting $(\overline{\lambda}_2)$, $I_1$ free & rare observation  & $0.0940, [0.0880, 0.1001]$ & $0.76$ & $0.38$ &  $0.38$ \\
        %Targeting $(\overline{\lambda}_1)$, $R_2$ free&  & $0.0859, [0.0802, 0.0917] $ &  $0.80$ & $0.35$ & $0.49$ \\
        Targeting $(\overline{\lambda}_2)$, $I_2$ free & $P(Y(T)=7)=0.027$ & $0.0921, [0.0864, 0.0977]$ & $0.98$ & $0.24$ &  $0.37$ \\
        \hline
    \end{tabular}
    \end{adjustbox}
    \caption{Example 2. Reversible Reaction $S_1 \longleftrightarrow S_2$. We took $z_0 = (10, 0)$, $c=(1, 1.5)$, $T=1$, $t=0.7$, $\Delta t = 0.1$, while taking a common observation as well as a rare observation. We used $N_r = 100$ trials, while each trial used filter sample size $N_s = 1000$.}
    \label{tab-free-slaved}
\end{table}

\subsection{Two-stage targeting algorithm}
We introduce the \textit{two-stage method} as another attempt to address the issue of growing variance of the weights. For estimating the system at time $t$ ($0 < t < T$), especially when $t$ is close to $T$, we could split the interval $[0, T]$ into two subintervals $[0, t_0]$ and $[t_0, T]$ where $0 \leq t_0\leq t \leq T$ and run the (unconditional) forward simulation in time interval $[0, t_0]$ and then apply the targeting algorithm in the interval $[t_0, T]$. This way, we reduce the length of time interval where the weights grow disparately. We also explore the choice of $t_0$ for splitting the interval.
%in Example 3.

We use Example 3 
\begin{equation}
    S_1 \stackrel{c_1}{\longrightarrow} S_2, \quad S_2 \stackrel{c_2}{\longrightarrow} S_1, \quad S_1 + S_2 \stackrel{c_3}{\longrightarrow} S_3, \quad S_3 \stackrel{c_4}{\longrightarrow} S_1 + S_2,
\end{equation}
%Let $Z(t) = (\#S_1(t), \#S_2(t), \#S_3(t))$. 
where species $S_3$ is observed, to illustrate the two-stage algorithm and compare it with just using a single stage.
%, so $X(t)=(\#S_1(t), \#S_2(t))$ and $Y(t)=\#S_3(t)$. Suppose $a_1(z)=c_1 z_1$, $a_2(z)=c_2 z_2$, $a_3(z) = c_3 z_1 z_2$ and $a_4(z)=c_4 z_3$.  

We took $Z(0)=(20,20,20)$, $c=(0.5, 1, 0.1, 1)$, $t=T=1$, and applied the naive method and several variants of targeting algorithm to estimate the conditional distribution of 
$Z_1(T)$. For an additional experiment with larger copy numbers, we took $Z(0)=(100,100,100)$, $c=(0.5, 1, 0.02, 1)$ (note the scaling of the reaction constant to account for scaling the volume in a bimolecular reaction) and kept $T=1$ (unchanged). We picked first three reactions as ``free reactions" and the last one is determined by $I_4 = I_3 - \Delta y$.
%Since $Z(T)-Z(0) = \nu R(T)$ where
%\begin{equation*}
%    \nu = \begin{bmatrix}
%        -1 & 1 & -1 & 1\\
%        1 & -1 & -1 & 1\\
%        \hline
%        0 & 0 & 1 & -1\\
%    \end{bmatrix}
%\end{equation*}
%we have $\Delta Y = Y(T)- Y(0) = I_3(T)-I_4(T)$.
%In this example, we solved Kolmogorov's forward equation numerically to obtain the conditional distribution to compare our stochastic filter results with. %Figure \ref{fig-cond-distr-two-stage-three-species} shows the estimated conditional distribution with $95\%$ confidence interval.

%We state our choices of inhomogeneous intensity $\lambda(t)$ as follows. 
For the single stage, i.e.\ targeting over the interval $[0,T]$, we took mesh size $\Delta t = 0.1$.
Given the nonlinearity of the system, we used the deterministic system to determine $\lambda(t)$. That is, we solved the reaction rate equations (MATLAB ODE solver) 
\begin{equation*}
\begin{split}
    &\frac{dz_1}{dt} = -c_1 z_1 + c_2 z_1 - c_3 z_1 z_2 + c_4 z_3, 
    \quad \frac{dz_2}{dt} = c_1 z_1 - c_2 z_1 - c_3 z_1 z_2 + c_4 z_3, \\
    &\frac{dz_3}{dt} = c_3 z_1 z_2 - c_4 z_3,
\end{split}
\end{equation*}
to obtain (piecewise constant) $\overline{\lambda}_1$ and then $\overline{\lambda}_2$ as described earlier. 

For the two-stage targeting algorithm, since the targeting is only applied in the second stage $[t_0,T]$ which is intended 
to be a small interval, we simply used a constant value for the intensity. We explored two possibilities for the choice of 
this constant vector intensity.
%\textit{two stage const $\lambda$, }
%In the second stage $[t_0, T]$ where the targeting algorithm is applied, we let $\lambda$ to be independent of time. 
One option we took was to use the sample mean of the propensity from the end of the first stage: $\overline{\lambda} = \frac{1}{N_s}\sum a(\tilde{Z}^{(i)}(t_0))$. The other option is to assign each individual sample $i$, a propensity that takes account of the state $\tilde{Z}^{(i)}(t_0)$. 
%while also steering the trajectory toward the observed state $y_T$. %In this specific example, there are $m=4$ reactions, with the number of the number of free reactions $m_1 = 3$ and the number of determined reaction $m_2 = 1$, so we write $\lambda^{(i)} = (\lambda'^{(i)}, \lambda''^{(i)})$ with $\lambda'^{(i)} \in \real^{3}_{+}$, $\lambda''^{(i)} \in \real^{}_{+}$. 
We chose $\lambda^{(i)} \in \real^{4}_{+}$ to be $\lambda^{(i)} = \max\{a(\tilde{Z}^{(i)}(t_0), a(1,1,1)\}$. The reason we introduce $a(1,1,1)$ is that $a(\tilde{Z}^{(i)}(t_1))$ could possibly contain zero components, while our method requires strictly positive intensities, so we enforce a small lower bound corresponding to the situation where there is one molecule for each species present. Our choice $a(1,1,1)$ is usually a small quantity compared to $a(\tilde{Z}^{(i)} (t_0)$ while sufficiently safeguarding the strict positivity of intensity $\lambda(t)$. %As a targeting algorithm, we obtain $\lambda''^{(i)}$ by solving $\nu'' \lambda^{(i)} (T-t_0)= y_T - V^{(i)}(t_0)$, then updated it by $\lambda''^{(i)} = \max\{\lambda''^{(i)}, a''(1,1,1)\}$. This way, we obtain the individual propensity $\lambda^{(i)} = (\lambda'^{(i)}, \lambda''^{(i)})$ for each sample. 
Numerical results are summarized in Table \ref{tab-stat-distance-three-species-common} for a common observation and Table \ref{tab-stat-distance-three-species-rare} for a rare observation. Table \ref{tab-stat-distance-three-species-common-100} shows the common observation situation for the larger copy number case of $Z(0)=(100,100,100)$.

One-stage targeting algorithm estimates the conditional distribution more accurately than the naive method. For the two-stage targeting algorithm, there is an optimal choice of $t_0$ to split the forward evolution stage and the targeting stage. When the length of the second interval $[t_0, T]$ is large, the growing disparity of the Girsanov weights could be a problem. However, if $t_0$ is very close to the final time $T$, we may be faced with very disparate Poisson weights $W$.
%as we're forcing a large number of reactions to occur in a short amount of time. 
With the optimal choice of $t_0$, the two-stage targeting appears to perform better than the single stage targeting. 
On the other hand, within the two-stage algorithm, it appears that the using a common intensity $\overline{\lambda}$ and individual propensities tend to give similar performance.

\begin{table}[htbp]
    \centering
    \begin{adjustbox}{width=\textwidth,center=\textwidth}
    \begin{tabular}{|c| c | c| c | c | c | c | c| }
        \hline
        Method  &TVE & ESF $(W)$  & ESF $(\tilde{L})$ & ESF & Runtime\\
          &($95\%$ interval) &  & & &(seconds)\\
        \hline
        Naive  & $0.2146, [0.2117, 0.2174]$ & N/A & N/A & $0.16$ & $0.331$\\
        \hline
        One stage $\lambda_1$, $\Delta t = 0.1$  & $0.1695, [0.1669, 0.1722]$ & $0.83$ & $0.20$ & $0.21$ & $0.773$\\
        One stage $\lambda_2$, $\Delta t = 0.1$ & $0.1699, [0.1673, 0.1726] $ &  $0.84$ & $0.20$ & $0.21$ & $0.770$\\
        \hline
        Two stage, $\overline{\lambda}$, $t_0=0.5$ & $0.1375, [0.1346, 0.1404] $  & $0.80$ & $0.36$ & $0.35$ & $0.55$\\
        Two stage, $\overline{\lambda}$, $t_0=0.8$ & $0.1133, [0.1117, 0.1150]$ & $0.74$ & $0.66$ & $0.54$ & $0.57$\\
        Two stage, $\overline{\lambda}$, $t_0=0.9$ & $0.1113, [0.1098, 0.1128]$ & $0.74$ & $0.81$ & $0.63$ & $0.50$\\
        Two stage, $\overline{\lambda}$, $t_0=0.99$ & $0.1852, [0.1826, 0.1878]$ & $0.39$ & $0.94$ & $0.39$ & $0.49$\\
        Two stage, $\overline{\lambda}$, $t_0=0.999$ & $0.2555, [0.2519, 0.2590]$ & $0.29$ & $0.96$ & $0.29$ & $0.54$\\
        \hline
        
        Two stage, $\lambda^{(i)}$, $t_0=0.5$& $0.1616, [0.1576, 0.1655]$ & $0.79$ & $0.31$ & $0.28$ & $0.54$\\  
        Two stage, $\lambda^{(i)}$, $t_0=0.8$ & $0.1045, [0.1031, 0.1059]$ & $0.86$ & $0.69$ & $0.72$ & $0.52$\\         
        Two stage, $\lambda^{(i)}$, $t_0=0.9$ & $0.1025, [0.1010, 0.1039]$ & $0.82$  & $0.85$ & $0.78$ & $0.50$\\                
        Two stage, $\lambda^{(i)}$, $t_0=0.99$ & $0.1826, [0.1802, 0.1851]$ &  $0.40$ & $0.98$ & $0.40$ & $0.50$\\
        Two stage, $\lambda^{(i)}$, $t_0=0.999$ & $0.2559, [0.2525, 0.2594]$ & $0.29$ & $0.99$ & $0.29$ & $0.55$\\
        \hline
    \end{tabular}
    \end{adjustbox}
    \caption{Example 3. A common observation. We took  $Z(0)=(20,20,20)$, $c=(0.5, 1, 0.1, 1)$, $t = T = 1$,  $y_T = 24$. We used $\overline{\lambda}$ to denote common intensity and $\lambda^{(i)}$ for individual intensities in the two stage methods. We used $N_r = 1,000$ trials with filter sample size $N_s = 1,000$.}
    \label{tab-stat-distance-three-species-common}
\end{table}

\begin{table}[htbp]
    \centering
    \begin{adjustbox}{width=\textwidth,center=\textwidth}
    \begin{tabular}{|c| c | c| c | c | c | c | c| }
        \hline
        Method  &TVE & ESF $(W)$  & ESF $(\tilde{L})$ & ESF & Runtime\\
          &($95\%$ interval) &  & & &(seconds)\\
        \hline
        Naive  & $ 0.4888, [0.4741, 0.5034]
$ & N/A & N/A & $0.07$ & $0.64$\\
        \hline
        One stage $\lambda_1$, $\Delta t = 0.1$  & $0.2995, [0.2900, 0.3089]
$ & $0.80$ & $0.16$ & $0.16$ & $0.84$\\
        One stage $\lambda_2$, $\Delta t = 0.1$ & $0.2955, [0.2862, 0.3048]
$ &  $0.84$ & $0.16$ & $0.17$ & $0.81$\\
        \hline
        Two stage, $\overline{\lambda}$, $t_0=0.5$ & $0.2458, [0.2377, 0.2540]
$  & $0.74$ & $0.28$ & $0.23$ & $0.73$\\
        Two stage, $\overline{\lambda}$, $t_0=0.8$ & $0.1750, [0.1694, 0.1805]
$ & $0.72$ & $0.65$ & $0.50$ & $0.80$\\
        Two stage, $\overline{\lambda}$, $t_0=0.9$ & $0.1767, [0.1712, 0.1823]$ & $0.64$ & $0.81$ & $0.56$ & $0.79$\\
        Two stage, $\overline{\lambda}$, $t_0=0.99$ & $0.2361, [0.2287, 0.2435]$ & $0.41$ & $0.95$ & $0.41$ & $0.78$\\
        Two stage, $\overline{\lambda}$, $t_0=0.999$ & $0.4680, [0.4559, 0.4800]$ & $0.17$ & $0.94$ & $0.17$ & $0.83$\\
        \hline
        
        Two stage, $\lambda^{(i)}$, $t_0=0.5$& $0.2549, [0.2427, 0.2671]
$ & $0.77$ & $0.28$ & $0.24$ & $0.74$\\  
        Two stage, $\lambda^{(i)}$, $t_0=0.8$ & $ 0.1561, [0.1516, 0.1606]$ & $0.85$ & $0.74$ & $0.72$ & $0.78$\\         
        Two stage, $\lambda^{(i)}$, $t_0=0.9$ & $0.1579, [0.1535, 0.1623]$ & $0.76$  & $0.90$ & $0.72$ & $0.77$\\                
        Two stage, $\lambda^{(i)}$, $t_0=0.99$ & $0.2363, [0.2296, 0.2430]$ &  $0.42$ & $0.98$ & $0.42$ & $0.79$\\
        Two stage, $\lambda^{(i)}$, $t_0=0.999$ & $0.4689, [0.4548, 0.4831]
$ & $0.17$ & $0.99$ & $0.17$ & $0.81$\\
        \hline
    \end{tabular}
    \end{adjustbox}
    \caption{Example 3. A common observation. We took $Z(0)=(100, 100, 100)$, $c=(0.5, 1, 0.02, 1)$, $t = T = 1$,  $y_T = 120$. We used $\overline{\lambda}$ to denote common intensity and $\lambda^{(i)}$ for individual intensities in the two stage methods. We used a filter sample size of $N_s = 1,000$ and ran $N_r=100$ independent trials.}
    \label{tab-stat-distance-three-species-common-100}
\end{table}

\begin{table}[htbp]
    \centering
    \begin{adjustbox}{width=\textwidth,center=\textwidth}
    \begin{tabular}{|c| c | c| c | c | c | c | c| }
        \hline
        Method  &TVE & ESF $(W)$  & ESF $(\tilde{L})$ & ESF & Runtime\\
          &($95\%$ interval) & & &  &(seconds)\\
        \hline
        Naive  & $0.4347, [0.4286, 0.4408] $ & N/A & N/A & $0.04$ & $0.326$\\
        \hline
        One stage $\lambda_1$, $\Delta t = 0.1$& $0.2184, [0.2152, 0.2217]$ &  $0.70$ & $0.12$ & $0.14$ & $0.791$\\
        One stage $\lambda_2$, $\Delta t = 0.1$ & $0.2103, [0.2071, 0.2135]$ & $0.84$ &
        $0.14$ & $0.15$ & $0.794$\\
        \hline
        Two stage, $\overline{\lambda}$, $t_0=0.5$ & $ 0.1677, [0.1653, 0.1701]$ & 
        $0.64$ &  $0.24$ & $0.25$ & $0.59$\\
        Two stage, $\overline{\lambda}$, $t_0=0.8$  & $0.1627, [0.1601, 0.1654] $ & 
        $0.50$ & $0.46$ & $0.31$ & $0.50$\\
        Two stage, $\overline{\lambda}$, $t_0=0.9$ & $ 0.1885, [0.1860, 0.1909]$ & $0.34$ & $0.66$ & $0.25$ & $0.56$\\
        Two stage, $\overline{\lambda}$, $t_0=0.99$ & $0.3963, [0.3911, 0.4015]$ & $0.08$ & $0.92$ & $0.07$ & $0.53$\\
        Two stage, $\overline{\lambda}$,$t_0=0.999$ & $0.5545, [0.5472, 0.5617]$ & $0.05$ & $0.96$ & $0.05$ & $0.54$\\
        \hline
        Two stage, $\lambda^{(i)}$, $t_0=0.5$ & $0.2115, [0.2070, 0.2160]$ & $0.64$  & $0.19$ & $0.19$ & $0.58$\\         
        Two stage, $\lambda^{(i)}$, $t_0=0.8$  & $0.1641, [0.1619, 0.1663]$ & $0.52$ & $0.49$ & $0.32$ & $0.51$\\        
        Two stage, $\lambda^{(i)}$, $t_0=0.9$ & $0.1890, [0.1866, 0.1914]$ & $0.33$ & $0.68$ & $0.26$ & $0.50$\\        
        Two stage, $\lambda^{(i)}$,$t_0=0.99$ & $0.3940, [0.3887, 0.3992]$ & $0.07$  & $0.89$ & $0.07$ & $0.53$\\
        Two stage, $\lambda^{(i)}$, $t_0=0.999$ & $0.5662, [0.5585, 0.5739]$ & $0.05$ & $0.92$ & $0.05$ & $0.56$\\        
       \hline
    \end{tabular}
    \end{adjustbox}
    \caption{Example 3. A rare observation. We took  $Z(0)=(20,20,20)$, $c=(0.5, 1, 0.1, 1)$, $t = T=1$, $y_T = 20$. We used $\overline{\lambda}$ to denote common intensity and $\lambda^{(i)}$ for individual intensities in the two stage methods. We used $N_r = 1,000$ trials, with filter sample size $N_s = 1,000$.}
    \label{tab-stat-distance-three-species-rare}
\end{table}

\subsection{Comparison with using the conditional propensity}
Here, we compare the targeting algorithm with the idea of using the conditional propensity function (when available) 
to generate $\tilde{Z}$ as discussed in section \ref{sec-cond-prop} (also see \cite{golightly2019efficient}). 
We use Examples 1 and 2 since in those two examples, the conditional propensities can be computed as described in Section \ref{sec-examples}. However, as mentioned there, only in Example 1 it is possible to obtain samples from the non-exponential wait times per the conditional propensity. In Example 2, sampling from the non-exponential wait time is not practical. An alternative to sampling from the
non-exponential wait time is to approximate the conditional propensity by holding its value constant at the most recent jump time, 
a strategy used in \cite{golightly2019efficient} which makes the wait times exponential.

%The ``CP approx" method uses a piecewise constant approximation of the conditional propensity function that takes constant propensity between two successive events $t_1$ and $t_2$, that is, $\lambda(s)= \lambda(x_t \,|\, y_T)$ for $s\in [t_1, t_2)$. %For that method, some simulation trajectories may not land at the desired state $y_T$ at final time $T$, and we assign weight $w = 0$. 
%This way, the wait time for the next event is exponentially distributed, makes it easier to implement. In general, it is not easy to derive an analytically tractable formula for the wait time and simulate the reaction network according to the exact conditional propensities. 

In Example 1, as Table \ref{tab-eg1} shows, the exact time-dependent conditional propensity method (CP exact) has the best performance, as expected. However, our targeting method significantly outperformed the approximate conditional propensity method (CP approx) described above. In Example 2, as mentioned earlier, only the ``CP approx'' method is practical. For the small total copy number case, as shown in Table \ref{tab-eg2-small}, ``CP-approx'' method slightly outperforms our targeting methods in the rare observation case while slightly underperforms our targeting methods in the common observation case. However, the computational time is much longer than the targeting methods. Moreover, in Example 2, for the modestly large copy number case the ``CP-approx'' method is computationally prohibitive and hence is not shown in Table \ref{tab-eg2-large-sys}.

\subsection{Resampling}\label{sec-resample}
Since the disparity of the weights will usually grow fast as time progresses, we could use the resampling technique to alleviate the issue \cite{bain2008fundamentals, rathinam2021state}. Example 2 with a larger final time $T=10$ is used to illustrate this. 
We resampled after intervals of length $\Delta s$ and for convenience we took $\Delta s = \Delta t = 0.25$, that is equal to the 
piecewise constant intervals of the intensity. See Table \ref{tab-eg2-resample}, where it is clear (from the expected TVE) that  resampling improves the performance. However, the resampling procedure itself introduces extra randomness which could hurt the accuracy on the other hand. The frequency of resampling is an interesting question to explore as in particle filtering method.

% We investigate this issue in Examle 1.
%We could resample every $\Delta s$, it is easier to implement it numerically if $\Delta s$ is a multiple of the $\Delta t$.

%The variance of Girsanov weight tends to grow rapidly as the length of time interval increase. Resampling techiques becomes necessary in this situation. We use example 2 and take a long $T$ to illustrate this. The results are shown in Table \ref{tab-eg2-resample}.

\begin{table}[htbp]
    \centering
    \begin{adjustbox}{width=1.2\textwidth,center=\textwidth}
    \begin{tabular}{|c| c | c| c | c | c|  c |c | c |}
        \hline
        Method & Observation &TVE  & ESF ($W)$ & ESF ($\tilde{L}$) & ESF &Runtime\\
         &  &($95\%$ interval) &   &  & & (seconds)\\
        \hline
        Naive &  $y_T=80$  & $ 0.4178, [0.4058, 0.4299] $ & N/A  &  N/A & $0.059$ & $11.1$\\
        Targeting $(\overline{\lambda}_1)$, without resampling & (common observation)  & $1.1456, [1.1126, 1.1786]$ & $0.861$ &  $0.005$ & $0.005$ & $35.9$\\
        %Targeting $(\overline{\lambda}_2)$ & $(p=0.0575)$ & $0.0626, [0.0607, 0.0646] $ & $0.88$ & $0.50$ & $0.44$ & $21.93$\\
        Targeting, resample $(\overline{\lambda}_1)$,  $\Delta s = 0.25$ &   & $0.1574, [0.1533, 0.1615]$ &  & & $0.948$ & $39.2$\\
        %Targeting,resample $(\overline{\lambda}_2)$ &   & $0.0724, [0.0704, 0.0744] $ &  &  & $ 0.94,$ & $20.48$\\
        \hline
        
        \hline
        Naive & $y_T = 98$   &  $1.7246, [1.7000, 1.7492]$ & N/A & N/A & $0.002$ & $17.9$\\
        Targeting $(\overline{\lambda}_1)$, without resampling & (rare observation) & $1.3004, [1.2694, 1.3315] $ & $0.804$ & $0.004$ & $0.003$ & $27.5$\\
        %Targeting $(\overline{\lambda}_2)$& $(p = 0.0043)$  & $0.0755, [0.0733, 0.0776]$ & $0.87$ &  $0.35$ & $0.31$ & $21.95$\\
        Targeting, resample $(\overline{\lambda}_1)$, $\Delta s = 0.25$ & & $0.1830, [0.1779, 0.1881]$ & & & $0.779$ & $27.9$\\
        %Targeting,resample $(\overline{\lambda}_2)$ $\Delta s = 0.25$ &    & $0.0894, [0.0871, 0.0917]$ & & & $0.62$ & $21.96$\\
        %Targeting,resample $(\overline{\lambda}_1), \Delta s = 0.5$& & $0.0719, [0.0700, 0.0738] $ & & & $0.79 $ & $22.26$\\
        %Targeting,resample $(\overline{\lambda}_2), \Delta s = 0.5$&    & $0.0788, [0.0768, 0.0808]$ & & & $0.68$ & $22.81$\\
        \hline
    \end{tabular}
    \end{adjustbox}
    \caption{Example 2. $S_1 \longleftrightarrow S_2$. We took $z_0 = (100, 100)$, $c=(1, 1.5)$, $T=10$, $t=9$, $\Delta t = 0.25$, while taking a common observation $y_T = 80$ as well as a rare observation $y_T =  98$. We used $N_r = 100$ trials, while each trial used filter sample size $N_s = 2,000$.  We note that in our experiment for the rare observation case, the naive method has  $98\%$ of the trials successfully producing a meaningful estimate without encountering a zero denominator issue.}
    \label{tab-eg2-resample}
\end{table}

\subsection{Increasing filter sample size}

Finally, we investigated the convergence 
of the errors (as measured by the expected total variation error) as the filter sample size $N_s$ increases. We focused on Example 3 and considered the naive method as well as four different variants of the targeting methods that were considered in Table \ref{tab-stat-distance-three-species-common}. We varied the filter sample size $N_s$ in the range of $1000 - 64000$. The plot of mean TVE (along with confidence intervals) in log scale against $1/\sqrt{N_s}$ is shown in Figure \ref{fig-convergence}. A reference line of slope $1$ is shown. All the methods seem to converge as $1/\sqrt{N_s}$. 

A few comments are in order. 
%Since the filter samples are independent in this experiment, 
As explained in Section \ref{sec-ESS}, by the law of large numbers  
%and the central limit theorem, 
the error in computing the conditional probability $\pi(t,z)$ of a fixed state $z$ is expected to converge to zero under modest integrability assumptions. 
However, the rate of convergence $1/\sqrt{N_s}$ mentioned in Section \ref{sec-ESS} is based on weak convergence. On the other hand, numerical estimate of this rate corresponds to the convergence in the mean of the quantity $\sqrt{N_s} \, |\hat{\pi}_{N_s}(t,z)-\pi(t,z)|$ (thus an $\mathcal{L}^1$ convergence rate of $1/\sqrt{N_s}$). 
We do not have a convergence proof of this nature at this point in time. Several convergence results for particle filters exist in the literature, see \cite{crisan2002survey} for instance. These results apply in situations where the state space is the continuum $\real^d$ and the distributions have densities. We believe it is possible to adapt these techniques to prove convergence for our situation where the state space is the integer lattice. This is a subject for future work.

\begin{figure}
    \centering
    \includegraphics{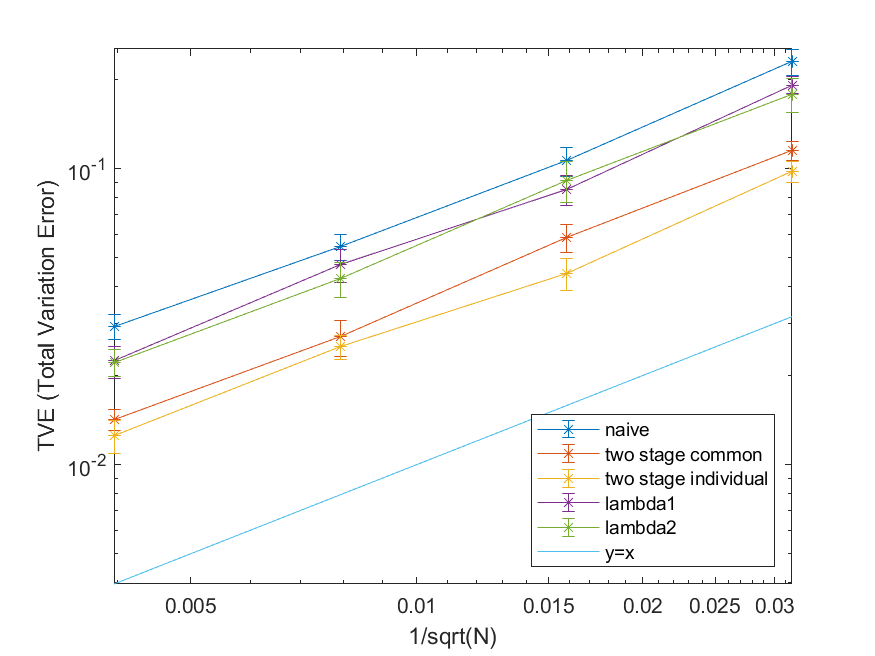}
    \caption{Convergence of various simulation methods as sample size increase. The Total Variation Error (TVE) against $\frac{1}{N_s}$ in log scale. We used Example 3 a common observation to illustrate. We took $Z(0)=(20, 20, 20)$, $c=(0.5, 1, 0.1, 1)$, $t = T = 1$, $\Delta t = 0.1$,  $y_T = 24$. We used $N_r = 100$ trials, filter sample sizes were $N_s = 1000, 4000, 16000, 64000$.}
    \label{fig-convergence}
\end{figure}

\section{Conclusion}\label{sec-conclusion}
In this paper we provided a new particle filtering method,
{\em the targeting algorithm}, for statistical inference in stochastic reaction networks where exact partial state observations are available in discrete time snapshots. Our targeting algorithm is designed so that the simulated process always satisfies the observations. This is accomplished by splitting the reaction channels into ``free'' and ``slaved'' ones and making use of the fact that the partial state observations impose linear constraints among the reaction counts. Moreover, under the simulation probability measure, in between observations, the reaction count processes are taken to be inhomogeneous Poissons to facilitate easy interpolation in between observations. 

We provided theoretical justification as well as numerical examples where the latter show superior performance compared to a prediction/correction approach.  
We also discussed and illustrated different choices within our targeting algorithm which include the selection of the inhomogeneous intensity for the proposals, the choice of free and slaved reactions and the two-stage targeting approach.  The optimal choice among these options is the subject of future research. While our work focused on the case of exact observations (which also covers Poisson type noise as explained in Section \ref{sec-intro}), our method could be adapted to the case of additive Gaussian noise considered by other researchers \cite{golightly2019efficient, fang2022stochastic}. In fact, the analysis provided in Appendix A suggests that, at least in the case of small additive noise, our targeting approach should perform much better than a prediction/correction approach. This is the subject of future investigations. 

\section*{Appendix A: Advantage of targeting over prediction/correction}
Here we consider a probability space $(\Omega,\sF,P_0)$ and an $m$-variate Poisson random variable $R(T)=(R'(T),R''(T))$ where $R'(T)$ is $\posint^{m_1}$ valued and $R''(T)$ is $\posint^{m_2}$ valued, with mean $M=(M',M'')$.
Let $C$ be a matrix and $d$ be a vector of appropriate dimensions such that the event $\{R''(T)=C R'(T)+d\}$ makes sense. We wish to generate an identically distributed weighted sample $(K^{(i)},W^{(i)})$ distributed like $(K,W)$ so that the conditional distribution
is given by
\[
P_0\{R(T)=k \, | \, R''(T)=C R'(T) +d\} = \frac{\E_0[1_{\{k\}}(K) W]}{\E_0[W]}.
\]
The targeting approach generates $K' \sim R'(T)$ and sets 
$K''= C K'+d$ and sets 
\[
W = W_p= \sum_{k'} P_0\{K''=C k' + d\} 1_{\{k'\}}(K').
\]
A more direct approach based on prediction/correction would be to generate $K \sim R(T)$ and set $W=W_i=1_{\{K''=C K'+d\}}$. We have used subscripts $p$ and $i$ to denote the Poisson weight and the indicator weight respectively. The effective sample fraction of a weight $W$ 
(w.r.t.\ probability measure $P_0$) could be taken to be $\text{ESF}=\E_0^2[W]/E_0[W^2]$. We obtain the following formulae for $\text{ESF}_p$ and $\text{ESF}_i$, the effective sample fractions of the Poisson weight and the indicator weight. First we note that 
\begin{equation}
\E_0[W_p]=\E_0[W_i]=P_0\{R''(T)=C R'(T)+d\},    
\end{equation}
so that both weights have the same mean which equals the probability of the event $\{R''(T)=C R'(T) + d\}$. 
On the other hand, we obtain 
\[
\E_0[W^2_i]=\E_0[W_i] = P_0\{R''(T)=C R'(T)+d\},
\]
where as
\[
\E_0[W_p^2] = \sum_{k'} P^2_0\{R''(T)=C k' + d\} P_0\{R'(T)=k'\}. 
\]
Let us denote by 
\[
\bar{\rho} = \max_{k'} P_0\{R''(T)=C k' +d\}.
\]
Then, we see that $\E_0[W^2_p] \leq \bar{\rho} P_0\{R''(T)=C R'(T) +d\}$. Consequently, $\text{ESF}_p \geq \text{ESF}_i/\bar{\rho}$. As an upper bound of $\bar{\rho}$ we may use the maximum of the p.m.f.\ of $R''(T)$ which is an $m_2$-variate Poisson. The maximum p.m.f.\ of a Poisson random variable with mean $M$ may be approximated by using the Stirling's approximation (which holds here for large $M$) to obtain 
$(2 \pi M)^{-1/2}$. Hence, we obtain the bound
\[
\text{ESF}_p \geq \text{ESF}_i (2 \pi)^{m_2/2} (M_{m_1+1} \cdots M_{m})^{1/2},
\]
which is valid when $M_i$ are sufficiently large. It is clear that the larger the product $M_{m_1+1} \cdots M_{m}$, the more efficient the targeting method would be over the prediction/correction approach. 
As an example, if only a single species is observed, then there is only one constraint among the reaction counts and hence $m_2=1$. If the mean reaction count is modestly large, say $10$, we expect an improvement in the effective sample fraction of $\sqrt{20 \pi} \approx 7.9$. Clearly, the improvement gets better if the number of observed species increases. 
%% If you have bibdatabase file and want bibtex to generate the
%% bibitems, please use
%%
\section*{Appendix B: Rigorous Treatment of the Targeting Algorithm}\label{sec-rigorous}

The targeting problem for a reaction network is stated as follows. We are given the following information about the lab process $Z$.
\begin{enumerate}
    \item An observed set of events $O_p$ from the past (before time $t=0$ which is referred to as ``present'') with $P(O_p)>0$. 
    \item The conditional probability $\mu$ of the lab process $Z$ at time $t=0$. Thus, for $z_0 \in \posint^{n}$
    \[
    \mu(z_0) = P\{ Z_0 = z_0\, | \, O_p\}.
    \]
    \item Future time $T>0$ and future observation $Y(T)=y$ 
    where $y \in \posint^{n_2}$.
\end{enumerate}
The goal is to compute the conditional distribution $\pi(t,z)$
 where for $t \in [0,T]$ and $z \in \posint^n$
 \[
 \pi(t,z) = P\{Z(T)=z \, | \, O_p, Y(T)=y\}.
 \]

The targeting algorithm consists of using an alternate probability measure $P_0$ such that under $P_0$ and under conditioning on $O_p$ and $\{Z_0=z_0\}$, the lab process $R$ for reaction counts is an inhomogeneous Poisson with a nonvanishing intensity $\lambda(t,z_0,y)>0$ on $[0,T]$. Then the filter processes $\tilde{Z}$ and $I$ (proxies for $Z$ and $R$) are created such that, under $P_0$ and when conditioned on $O_p$ and $\{\tilde{Z}_0=z_0\}$, $I$ is an inhomgeneous Poisson with a nonvanishing intensity $\lambda(t,z_0,y)>0$ on $[0,T]$.

We remark that it is conceptually easier to think of the role 
of the future observation $y$ as fixed. Thus, for a given targeting problem, $y$ is fixed and the probability measure $P_0$ depends on $y$, thus $P_0=P_0^y$. We shall suppress the superscript $y$. The desired probability measure $P=P^y$ under which $R$ has the correct intensity is accomplished by a 
Girsanov change of measure: $dP = L(T) dP_0$. Since the filter process $\tilde{Z}$ will be used for the estimation, the probability measure to use shall $d \tilde{P}=\tilde{L}(T) dP_0$.  

\subsection*{B.1: The Lab Process}
We start with $(\Omega,\sF,P_0)$, a probability space. 
%Let $n=n_1+n_2$, $m=m_1+m_2$ where $n_1$,$n_2$,$m_1$ and $m_2$ are in $\nat$.
Let $T>0$, $X_0:\Omega \to \posint^{n_1}$ and $Y_0:\Omega \to \posint^{n_2}$ be random variables. Define $Z_0=(X_0,Y_0)$. Let $\nu \in \integ^{n \times m}$.

Let $R:[0,T] \times \Omega \to \posint^m$ be an $m$-variate counting process 
which is broken into $R=(R',R'')$ where $R'$ and $R''$ are $m_1$-variate and $m_2$-variate respectively. 
This means for $i=1,\dots,m$, $R_i$ is a {\em cadlag} process which is non-decreasing, takes values in $\posint$ and $R_i(0)=0$. 

Define processes $X,Y$ by 
\[
\begin{aligned}
X(t) &= X_0 + \nu' R(t),\\
Y(t) &= Y_0 + \nu'' R(t).\\
\end{aligned}
\]
The $\integ^n$ valued process $Z$ is defined by $Z(t)=(X(t),Y(t))$. Thus we may write
\[
Z(t) = Z_0 + \nu R(t).
\]
We let $\sY_p \subset \sF$ denote a sub $\sigma$-algebra of observation events that ``happened prior'' to time $t=0$ and we assume $O_p \in \sY_p$ is a set of ``past observations'' with $P_0(O_p)>0$. 
We define 
\[
\sF_t = \sY_p \bigvee \sigma(Z(s), R(s) \, | \, 0 \leq s \leq t). 
\]
Recall that we may assume that with suitable  splitting of the reactions $R(t)=(R'(t),R''(t))$, 
%and possible removal of dependent observable species (corresponding to dependent rows of $\nu''$) 
we may write $\nu''$ as 
\begin{equation}
    \nu'' = [A \; B]
\end{equation}
where $B$ is invertible. This yields
\[
Y(T)=Y_0 + A R'(T) + B R''(T)
\]
and hence 
\[
R''(T) = B^{-1} (Y(T)-Y_0-A R'(T)).
\]
For notational convenience we define $G:\integ^{n_2} \times\posint^{m_1} \to \posint^{m_2}$ by 
\[
G(\Delta y, k') = B^{-1} (\Delta y - A k')
\]
for $\Delta y \in \integ^{n_2}$ and $k' \in \posint^{m_1}$. For $\Delta y \in \integ^{n_2}$ 
we also define the subset $S_{\Delta y} \subset \posint^{m_1}$ by 
\[
S_{\Delta y} = \{ k' \in \posint^{m_1} \, | \, B^{-1}(\Delta y - A k') \in \posint^{m_2} \}.
\]
For $\Delta y \in \integ^{n_2}$ and $k=(k',k'') \in \posint^{m_1} \times \posint^{m_2}$ the following equivalence holds: 
\[
\Delta y = \nu'' k  \Longleftrightarrow  k' \in S_{\Delta y} \text{ and } k'' = G(\Delta y,k').
\]
Since $Y(T)=Y_0 + \nu'' R(T)$, we note that for $y \in \integ^{n_2}$
\[
\{Y(T)-Y_0=y\} \subset \{R'(T) \in S_y\}.
\]

%We fix $y \in \posint^{n_2}$. 
We are given $\lambda:[0,T] \times \posint^{n} \times \posint^{n_2} \to (0,\infty)^m$, a positive function. We are also given $\mu$, be a probability measure on $\posint^{n}$.

We suppose that the following hold under $P_0$: 
\begin{enumerate}
\item Conditioned on $O_p$, $Z_0=(X_0, Y_0)$ has the distribution $\mu$. 
\item Conditioned on $O_p$ and on $Z_0=z_0=(x_0,y_0)$, $R$ is an $m$-variate inhomogeneous Poisson process with intensity $\lambda(t,z_0,y)$. 
\end{enumerate}

We also split $\lambda$ into $\lambda=(\lambda',\lambda'')$ where $\lambda'$ is $\real^{m_1}$ valued and 
$\lambda''$ is $\real^{m_2}$ valued. 

Later on we shall consider another probability measure $P$ on $(\Omega, \sF)$ under which $R$ is an $m$-variate counting process with $\sF_t$-intensity $a(Z(t-))$. Thus, under $P$, the processes $R$ and $Z$ represent the reaction network. 
Since $\lambda$ is strictly positive, $P$ will be absolutely continuous with respect to $P_0$. 

\begin{remark}
When we refer to an $m$-variate Poisson process or an $m$-variate Poisson random variable with an $m$-vector valued intensity or mean, we assume that the components are independent and have intensity or mean equal to the corresponding component.
\end{remark}

% Not used.

%\begin{lemma}\label{lem-indicator}
%\[
%\sum_{y_0 \in \posint^{n_2}, k \in \posint^m} 1_{\{y-y_0\}}(\nu''k) \, 1_{\{k\}}(R(T)) 1_{\{y_0\}}(Y_0)= 1_{\{y\}}(Y(T)). %\]
%\end{lemma}
%\begin{proof}
%We have that 
%\[
%\begin{aligned}
%1_{\{y-y_0\}}(\nu''k) \, 1_{\{k\}}(R(T)) &= 1_{\{y-y_0\}}(\nu'' R(T)) \, 1_{\{k\}}(R(T))\\ 
%&= 1_{\{y-y_0\}}(Y(T)-Y_0) \, 1_{\{k\}}(R(T)).
%\end{aligned}
%\]
%Summation over $k$ gives that $\sum_{k \in \posint^m} 1_{\{y-y_0\}}(\nu''k) \, 1_{\{k\}}(R(T)) = 1_{\{y-y_0\}}(Y(T)-Y_0)$.
%Thus
%\[
%\begin{aligned}
%\sum_{y_0 \in \posint^{n_2}} \sum_{k \in \posint^m} 1_{\{y-y_0\}}(\nu''k) \, 1_{\{k\}}(R(T)) \, 1_{\{y_0\}}(Y_0) &= \sum_{y_0 \in \posint^{n_2}} 1_{\{y-y_0\}}(Y(T)) \, 1_{\{y_0\}}(Y_0)\\ 
%&= \sum_{y_0 \in \posint^{n_2}} 1_{\{y\}}(Y(T)) \, 1_{\{y_0\}}(Y_0) = 1_{\{y\}}(Y(T)).
%\end{aligned}
%\]
%\end{proof}

\subsection*{B.2: Construction of filter random variables $\tilde{Z}_0=(U_0,V_0)$, $K$ and $W_p$}

%Let $y \in \posint^{n_2}$.

On $(\Omega,\sF,P_0)$ we create a sequence $(U_{0,i}, V_{0,i},K'_i)$ which is $\posint^{n_1} \times \posint^{n_2} \times \posint^{m_1}$ with the following properties (under $P_0$). 

\begin{enumerate}
\item Conditioned on $O_p$, $(X_0, Y_0)$ and $(U_{0,i}, V_{0,i})$ (for $i \in \nat$) are all independent, and have the same distribution $\mu$. 
\item Conditioned on $O_p$, $K_i'$ for $i \in \nat$ 
is an iid sequence that is independent of $X_0, Y_0$ and $R$. 
\item For each $i$, when conditioned on $O_p$ 
and on $(U_{0,i}, V_{0,i})=(x_0, y_0)$, 
$K'_i$ is an $m_1$-variate Poisson random variable with mean $\int_0^T \lambda'(t, x_0, y_0,y) dt$. Thus for $k' \in \posint^{m_1}$ and $(x_0,y_0) \in \posint^n$
\[
P_0\{K'_i=k' \, | \, U_{i,0}=x_0, V_{i,0}=y_0, O_p\} = P_0\{R'(T)=k' \, | \, X_0=x_0, Y_0=y_0, O_p\}.
\]
\end{enumerate}

Define $\rho:\posint^{m_2} \times \posint^{n_1} \times \posint^{n_2} \times \posint^{n_2} \to \real$ as follows. For each $x_0, y_0, y$, $\rho( \cdot, x_0, y_0, y)$ is the p.m.f.\ of an $m_2$-variate Poisson random variable with mean $\int_0^T \lambda''(t,x_0, y_0,y) dt \in \real^{m_2}$. 

Define the random variables $N, K', K'', U_0, V_0$ and $W_p$ as follows. 
Define $N$ by
\[
N = \min \{ i \in \nat \, | \, K'_i \in S_{y-V_{0,i}} \}.
\]
Throughout this section we make the following (sensible) assumption.
\begin{assumption}\label{ass-feasible} 
$P\{Y(T)=y\}>0$. 
\end{assumption}
Since $P$ is absolutely continuous with respect to $P_0$, it follows that $P_0\{Y(T)=y\}>0$ and hence there exists a $y_0$ with $P_0\{Y_0=y_0\}>0$ such that $S_{y-y_0}$ is nonempty.
Therefore
\[
P_0\{R'(T) \in S_{y-y_0}\} >0.
\]
Hence, it follows that
\begin{equation*}
\begin{split}
&P_0\{R_i' \in S_{y-V_{0,i}}\} = \sum_{\tilde{y_0}} P_0\{R_{0,i}'(T) \in S_{y-\tilde{y_0}}\,|\, V_0=\tilde{y_0}\} P_0\{V_0=\tilde{y_0}\} \\
& > P_0\{R'(T) \in S_{y-{y_0}}\,|\, Y_0={y_0}\} P_0\{Y_0={y_0}\}>0
\end{split}
\end{equation*}
and thus
\[
P_0\{N < \infty\} = 1.
\]
Thus under Assumption \ref{ass-feasible}, $N$ is finite and $K'$, $K''$ and $W_p$ below are well defined:  
$K' = R'_N$, $K''=G(y-V_{0,N},K')$, $U_0 = U_{0,N}, V_0 = V_{0,N}$, $W_p=\rho(K'',U_0, V_0,y)$.  
Let $K=(K',K'')$.  We let $\tilde{Z}_0=(U_0,V_0)$. 

%\begin{remark}\label{rem-feasible}
%Suppose $Y_0=y_0$ and $Y(T)=y$. Then by construction, $K' \in S_{y-y_0}$ and $K''=G(y-y_0,K')$.
%\end{remark}

We shall denote the observed event 
\[
O_{y} = \{Y(T)=y\},
\]
and define $O=O_y \cap O_p$ (past and future observations).
We note that if there is an observation $Y_0=y_0$ at $t=0$ 
then it is automatically captured in this analysis by taking 
the $\sum_{x_0} \mu(x_0,y_0)=1$ and $\mu(x_0, \tilde{y}_0)=0$ 
for $\tilde{y}_0 \neq y_0$. 
%and frequently suppress the subscript and write $O$ to 
%mean $O_{y}$. 

\begin{lemma}\label{lem-Ri'-N-YT}
For $i \in \nat$, $z_0=(x_0,y_0) \in \posint^{n}$ and $k' \in \posint^{m_1}$ 
\[
%\begin{aligned}
P_0\{U_{0,i} =x_0, V_{0,i}=y_0, K_i' = k' \, | \, N=i, O\} = 1_{S_{y-y_0}}(k') \frac{ P_0\{X_0=x_0, Y_0=y_0, R'(T) = k' \, | \, O_p\}}{P_0\{R'(T) \in S_{y-Y_0} \, | \, O_p\}}.
%\end{aligned}
\]
\end{lemma}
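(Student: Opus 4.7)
The plan is to reduce the conditional probability on the left to a computation about a single triple $(U_{0,i}, V_{0,i}, K_i')$ by exploiting two independence structures: (i) the triples $(U_{0,j}, V_{0,j}, K_j')$ for different $j$ are iid conditional on $O_p$; (ii) the filter sequence is independent of the lab process conditional on $O_p$. The construction of the filter variables is then used to identify the resulting marginal law with that of $(X_0, Y_0, R'(T))$ given $O_p$.

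First, set $A_j = \{K_j' \in S_{y-V_{0,j}}\}$ so that $\{N=i\} = A_1^c \cap \cdots \cap A_{i-1}^c \cap A_i$. Under (i), the events $A_1^c, \ldots, A_{i-1}^c$ involve only triples $j<i$ that are independent of the $i$-th triple given $O_p$. Hence they drop out of the conditioning:
\[
P_0\{(U_{0,i},V_{0,i},K_i')=(x_0,y_0,k') \mid N=i, O_p\} = P_0\{(U_{0,i},V_{0,i},K_i')=(x_0,y_0,k') \mid A_i, O_p\}.
\]
The lab observation $O_y = \{Y(T)=y\}$ is measurable with respect to the lab process, while $\{N=i\}$ together with the $i$-th triple is measurable with respect to the filter sequence. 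By (ii), conditioning further on $O_y$ does not change the distribution, so the same equality holds with $O = O_p \cap O_y$ in place of $O_p$ on the left.

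Finally, by the construction of the filter sequence, $(U_{0,i},V_{0,i})$ has law $\mu$ given $O_p$, and conditional on $(U_{0,i},V_{0,i})=(x_0,y_0)$ and $O_p$, $K_i'$ is $m_1$-variate Poisson with mean $\int_0^T \lambda'(t,x_0,y_0,y)\,dt$, which by hypothesis on the lab process matches the conditional law of $R'(T)$ given $X_0=x_0, Y_0=y_0, O_p$. Hence
\[
P_0\{(U_{0,i},V_{0,i},K_i')=(x_0,y_0,k'),\, A_i \mid O_p\} = P_0\{X_0=x_0, Y_0=y_0, R'(T)=k' \mid O_p\} \, 1_{S_{y-y_0}}(k'),
\]
and dividing by $P_0(A_i\mid O_p) = P_0\{R'(T)\in S_{y-Y_0}\mid O_p\}$ (obtained by summing the numerator over $x_0,y_0,k'$) gives the claimed formula.

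The only delicate point is verifying the first reduction: that conditioning on $\{N=i\}$ and on $O_p$ collapses to conditioning just on $A_i$ and on $O_p$ for the distribution of the $i$-th triple. This follows cleanly from the product structure of the iid joint law of the filter triples given $O_p$, but it deserves a careful sentence so the reader is not confused by the fact that $N$ is a first-hitting index.
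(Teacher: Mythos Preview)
Your proof is correct and follows essentially the same route as the paper: decompose $\{N=i\}$ into $A_1^c\cap\cdots\cap A_{i-1}^c\cap A_i$, use the conditional iid structure of the filter triples together with the conditional independence of the filter sequence from the lab process (given $O_p$) to drop the events $A_j^c$ for $j<i$ and the event $O_y$, and then identify the law of $(U_{0,i},V_{0,i},K_i')$ with that of $(X_0,Y_0,R'(T))$ given $O_p$. The paper carries out exactly this computation as a direct chain of equalities; your version simply packages the two independence observations up front, which is a presentational choice rather than a different argument.
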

\begin{proof}
%First we note that as $S_{y-y_0}$ is nonempty  
%it follows that $P_0\{R'(T) \in S_y\}>0$.
If $k' \notin S_{y-y_0}$ then both sides of the equation in the lemma are zero. Suppose $k' \in S_{y-y_0}$. 
Then we have that
\[
\begin{aligned}
&P_0\{U_{0,i}=x_0, V_{0,i} = y_0, K_i'=k' \, | \, N=i, O\}\\
&=P_0\{U_{0,i}=x_0, V_{0,i}=y_0, K_i'=k' \, | \, K'_i \in S_{y-V_{0,i}}, K'_j \notin S_{y-V_{0,j}} \text{ for } j<i, O_y, O_p\}\\
&=\frac{P_0\{U_{0,i}=x_0, V_{0,i}=y_0,  K_i'=k', K'_i \in S_{y-y_0}, K'_j \notin S_{y-V_{0,j}} \text{ for } j<i, O_y \, | \, O_p\}}{P_0\{K'_i \in S_{y-V_{0,i}}, K'_j \notin S_{y-V_{0,j}} \text{ for } j<i, O_y \, | \, O_p \}}\\
&=\frac{P_0\{U_{0,i}=x_0, V_{0,i}=y_0,  K_i'=k' \, | \, O_p\}}{P_0\{K'_i \in S_{y-V_{0,i}} \, | \, O_p\}}.\\
\end{aligned}
\]
We note that the third equality follows since the other events in the numerator are independent of $Y(T)$ (and thus of $O_y$)  when conditioned on $O_p$.  
Moreover, 
when conditioned on $O_p$, $(U_{0,i}, V_{0,i})$ is distributed like $(X_0, Y_0)$, and conditioned on $(U_{0,i}, V_{0,i})=(x_0,y_0)$ and $O_p$, $K'_i$ is distributed like $R'(T)$ when conditioned on $(X_0,Y_0)=(x_0,y_0)$ and $O_p$. Thus, we have $$P_0\{U_{0,i}=x_0, V_{0,i}=y_0,  K_i'=k' \, | \, O_p\}= P_0\{X_0=x_0, Y_0=y_0, R'(T) = k' \, | \, O_p\}.$$ Also, $P_0\{K'_i \in S_{y-V_{0,i}} \, | \, O_p\} = P_0\{R'(T) \in S_{y-Y_0} \, | \, O_p\}$. These observations clinch the result.
\end{proof}

\begin{proposition}\label{prop1}
For $z_0=(x_0,y_0) \in \integ^{n}, k' \in \posint^{m_1}$ we have that
\[
%\begin{aligned}
P_0\{\tilde{Z}_0=z_0, K'=k' \, | \, O\}
= 1_{s_{y-y_0}}(k') \, \frac{P_0\{Z_0=z_0, R'(T) = k' \, | \, O_p\}}{P_0\{R'(T) \in S_{y-Y_0} \, | \, O_p\}}.
%\end{aligned}
\]
\end{proposition}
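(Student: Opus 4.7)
The plan is to reduce Proposition 1 to Lemma 1 by conditioning on the value of the stopping index $N$ and summing. By Assumption \ref{ass-feasible} and the argument immediately preceding Lemma \ref{lem-Ri'-N-YT}, $P_0\{N<\infty\}=1$, so the events $\{N=i\}$, $i \in \nat$, partition the sample space up to a $P_0$-null set. Moreover, on $\{N=i\}$ we have $\tilde Z_0=(U_0,V_0)=(U_{0,i},V_{0,i})$ and $K'=K'_i$ by construction.

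First I would write
\[
P_0\{\tilde Z_0=z_0, K'=k' \,|\, O\} = \sum_{i=1}^{\infty} P_0\{U_{0,i}=x_0, V_{0,i}=y_0, K'_i=k' \,|\, N=i, O\}\, P_0\{N=i \,|\, O\}.
\]
Then I would apply Lemma \ref{lem-Ri'-N-YT} to each term in the sum. Crucially, the right-hand side of Lemma \ref{lem-Ri'-N-YT} does not depend on $i$: it is expressed entirely in terms of the lab variables $(X_0,Y_0,R'(T))$ and the fixed indicator $1_{S_{y-y_0}}(k')$. Therefore the common factor pulls out of the sum, and the remaining sum $\sum_{i=1}^{\infty} P_0\{N=i\,|\,O\}$ equals $1$ by the partition property, yielding exactly the claimed identity.

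The step I expect to require the most care is justifying why the conditional distribution in Lemma \ref{lem-Ri'-N-YT} is $i$-independent, since in principle the event $\{N=i\}$ encodes information about the failed trials $j<i$. The reason it works is exactly the content of Lemma \ref{lem-Ri'-N-YT}: when that lemma is proved, the numerator collapses to $P_0\{U_{0,i}=x_0,V_{0,i}=y_0,K'_i=k'\,|\,O_p\}$ and the denominator to $P_0\{K'_i \in S_{y-V_{0,i}}\,|\,O_p\}$, both of which are independent of $i$ by the iid hypothesis on the sequence $(U_{0,i},V_{0,i},K'_i)$ and the fact that this sequence is constructed to match, conditionally on $O_p$, the joint law of $(X_0,Y_0,R'(T))$. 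Once this is noted, the remainder of the proof is a one-line computation, so the proposition really is an immediate corollary of Lemma \ref{lem-Ri'-N-YT} combined with total probability over $N$.
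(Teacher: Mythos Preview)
Your proposal is correct and matches the paper's own proof essentially line for line: the paper also decomposes $P_0\{\tilde Z_0=z_0,K'=k'\mid O\}$ by conditioning on $\{N=i\}$, invokes Lemma~\ref{lem-Ri'-N-YT} for each summand, and uses that the resulting expression is $i$-independent so that $\sum_i P_0\{N=i\mid O\}=1$ finishes the argument. If anything, your write-up is more explicit than the paper's (which dispatches the proof in two sentences) in articulating why the $i$-independence holds.
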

\begin{proof}
We may write
\[
%\begin{aligned}
P_0\{U_0=x_0, V_0=y_0, K'=k' \, | \, O \}
= \sum_{i \in \nat} P_0\{U_{0,i}=x_0, V_{0,i}=y_0, K'=k' \, | \, N=i, O \} P_0\{N=i \, | \, O \}.
%\end{aligned}
\]
%&= \sum_{i \in \nat} P_0\{Z_0=z_0, R'(T)=k' \, | \, R'(T) \in S_y \} P_0\{N=i \, | \, Y(T)-Y_0=y\}\\ &= P_0\{Z_0=z_0, R'(T)=k' \, | \, R'(T) \in S_y \},
%\end{aligned}
%\]
Applying Lemma \ref{lem-Ri'-N-YT} yields the result.
\end{proof}

\begin{proposition}\label{prop2}
For $z_0=(x_0,y_0) \in \integ^{n}, k \in \posint^m$ we have that
\[
P_0\{Z_0=z_0, R(T)=k \, | \, O \} = \frac{\E_0[1_{\{(z_0, k)\}}(\tilde{Z}_0, K) \, W_p \, | \, O]}{\E_0[W_p \, | \, O]}.
\]
\end{proposition}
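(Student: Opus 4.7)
The plan is to reduce the computation to Proposition \ref{prop1} by exploiting the fact that $K'' = G(y-V_0, K')$ and $W = \rho(K'', U_0, V_0, y)$ are deterministic functions of $(\tilde{Z}_0, K')$. Hence the conditional law of $(\tilde{Z}_0, K, W)$ given $O$ is already encoded in Proposition \ref{prop1}; the task is essentially bookkeeping together with an appeal to conditional independence.

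First I would observe that for $k=(k',k'') \in \posint^m$ and $z_0=(x_0,y_0) \in \posint^n$, the event $\{\tilde{Z}_0 = z_0,\, K = k\}$ is empty unless $k' \in S_{y-y_0}$ and $k'' = G(y-y_0, k')$; in that case it coincides with $\{\tilde{Z}_0 = z_0,\, K' = k'\}$ and on this event $W = \rho(k'', x_0, y_0, y)$. Using this together with Proposition \ref{prop1}, the numerator becomes
\[
\E_0[1_{\{(z_0,k)\}}(\tilde{Z}_0, K)\, W \mid O] = \rho(k'', x_0, y_0, y)\, 1_{S_{y-y_0}}(k') \, \frac{P_0\{Z_0=z_0,\, R'(T)=k' \mid O_p\}}{P_0\{R'(T) \in S_{y-Y_0} \mid O_p\}}.
\]

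For the denominator, I would sum this expression (without the indicator $1_{\{(z_0,k)\}}$) over all $(z_0, k')$ with $k' \in S_{y-y_0}$. Two observations then come into play: (i) by the definition of $\rho$ and the Poisson assumption on $R$ under $P_0$, one has $\rho(G(y-y_0,k'),x_0,y_0,y) = P_0\{R''(T) = G(y-y_0,k') \mid Z_0=z_0,\, O_p\}$; and (ii) conditional on $Z_0 = z_0$ and $O_p$, the components of $R$ are independent Poissons, so $R'(T)$ and $R''(T)$ are independent. Combining these yields
\[
\E_0[W \mid O] = \frac{1}{P_0\{R'(T) \in S_{y-Y_0} \mid O_p\}} \sum_{z_0,\, k' \in S_{y-y_0}} P_0\{Z_0 = z_0,\, R'(T)=k',\, R''(T) = G(y-y_0, k') \mid O_p\}.
\]
On each summand, $Y(T) = y_0 + A k' + B G(y-y_0, k') = y$ automatically, so the union of these disjoint events is exactly $\{Y(T) = y\}$, and the sum collapses to $P_0\{Y(T)=y \mid O_p\}$.

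Dividing numerator by denominator and applying the same independence identity in reverse to the numerator gives
\[
\frac{\E_0[1_{\{(z_0,k)\}}(\tilde{Z}_0,K)\,W \mid O]}{\E_0[W \mid O]} = \frac{P_0\{Z_0 = z_0,\, R'(T)=k',\, R''(T)=k'' \mid O_p\}}{P_0\{Y(T)=y \mid O_p\}} = P_0\{Z_0=z_0,\, R(T)=k \mid O\},
\]
where both sides vanish when $k'' \neq G(y-y_0, k')$. The main obstacle is really the careful bookkeeping of the indicator $1_{S_{y-y_0}}$, the identification of $\rho$ with the conditional pmf of $R''(T)$ under $P_0$, and the invocation of conditional independence of $R'(T)$ and $R''(T)$; once these are in place the algebra collapses cleanly.
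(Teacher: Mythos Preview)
Your proposal is correct and follows essentially the same route as the paper's proof: both reduce to Proposition \ref{prop1} via the identification $\{\tilde{Z}_0=z_0,K=k\}=\{\tilde{Z}_0=z_0,K'=k'\}$ when $k''=G(y-y_0,k')$, recognize $\rho(k'',x_0,y_0,y)=P_0\{R''(T)=k''\mid Z_0=z_0,O_p\}$, invoke conditional independence of $R'(T)$ and $R''(T)$, and then sum to obtain $\E_0[W\mid O]=P_0\{Y(T)=y\mid O_p\}/P_0\{R'(T)\in S_{y-Y_0}\mid O_p\}$ before dividing. The only cosmetic difference is that the paper first simplifies the numerator all the way to $P_0\{Z_0=z_0,R(T)=k,Y(T)=y\mid O_p\}/P_0\{R'(T)\in S_{y-Y_0}\mid O_p\}$ and then sums, whereas you compute numerator and denominator in parallel and combine at the end.
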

\begin{proof}
%We only need to consider $y \in \integ^{n_2}$ with $P_0\{Y(T)-Y_0=y\}>0$. 
Let $k=(k',k'')$ where $k' \in \posint^{m_1}, k'' \in \posint^{m_2}$.
If $y \neq y_0 + \nu'' k$ then both sides of the equation 
in Proposition \ref{prop2} are zero. 

Suppose $y=y_0 + \nu'' k$. Then $k''=G(y-y_0,k')$, $k' \in S_{y-y_0}$ and 
the event equality $\{K=k\}=\{K'=k'\}$ holds. Hence
%and $z_0 \in \integ^n$. Then
\[
\E_0[ 1_{\{(z_0, k)\}}(\tilde{Z}_0,K) \, W_p \, | \, O] = \frac{\E_0[ W_p \, ; \,\tilde{Z}_0=z_0, K'=k', O]}{P_0\{O\}}.
\]
We note that on $\{K'=k', \tilde{Z}_0=z_0\}$ 
\[
W_p = \rho(k'',x_0, y_0,y) = P_0\{R''(T)=k''\,|\, Z_0 = z_0\} = P_0\{R''(T)=k''\,|\, Z_0 = z_0, O_p\} .
\]
Hence 
\[
\begin{aligned}
\E_0[ &1_{\{(z_0,k)\}}(\tilde{Z}_0, K) \, W_p \, | \,O]\\ &= P_0\{R''(T)=k''\,|\, Z_0 = z_0\} \, P_0\{\tilde{Z}_0 = z_0, K'=k' \, | \, O\}\\
&= P_0\{R''(T)=k''\,|\, Z_0 = z_0\} \, \frac{P_0\{Z_0=z_0, R'(T)=k' \, | \, O_p\}}{P_0\{R'(T) \in S_{y-Y_0} \, | \, O_p\}}\\
&=  P_0\{R''(T)=k''\,|\, Z_0 = z_0\}  \, \frac{P_0\{ R'(T)=k' \,|\,Z_0=z_0 \} P_0\{Z_0=z_0 \, | O_p\}}{P_0\{R'(T) \in S_{y-Y_0} \, | O_p\}}\\
&= \frac{P_0\{R'(T)=k', R''(T)=k''\,|\, Z_0 = z_0\}P_0\{Z_0=z_0\, | \ O_p\}}{P_0\{R'(T) \in S_{y-Y_0} \, | \, O_p\}}\\
&= \frac{P_0\{R(T)=k,  Z_0 = z_0 \, | \, O_p\}}{P_0\{R'(T) \in S_{y-Y_0} \, | \, O_p\}}
\end{aligned}
\]
where there second equality follows from Proposition \ref{prop1} (noting that $k' \in S_{y-y_0}$) and the fourth equality follows from the conditional independence of $R'(T)$ and $R''(T)$ given $Z_0$.  We have also used the independence of $(R'(T),R''(T))$ from $O_p$ when conditioned on $Z_0=z_0$.  

Thus for all $k \in \posint^{m}$ we may write
\[
\E_0[ 1_{\{(z_0, k)\}}(\tilde{Z}_0, K) \, W_p \, | \,O] = 1_{\{y-y_0\}}(\nu'' k) \, \frac{P_0\{Z_0=z_0, R(T)=k \, | \, O_p\}}{P_0\{R'(T) \in S_{y-Y_0} \, | \, O_p\}}.
\]
Since 
\[
1_{\{y-y_0\}}(\nu'' k) 1_{\{z_0\}}(Z_0) 1_{\{k\}}(K) = 1_{\{y\}}(Y(T) 1_{\{z_0\}}(Z_0) 1_{\{k\}}(K),
\]
it follows that
\begin{equation}\label{eq1x0kw}
\E_0[ 1_{\{(z_0, k)\}}(\tilde{Z}_0, K) \, W_p \, | \,O] = \frac{P_0\{Z_0=z_0, R(T)=k , Y(T)=y\, | \, O_p\}}{P_0\{R'(T) \in S_{y-Y_0} \, | \, O_p\}}.
\end{equation}
%Noting that $z_0=(x_0,y_0)$ and summing over all $x_0$, we obtain that
%\[
%\E_0[1_{\{(y_0,k)\}}(V_0, K) \, W \, | \, O] = 1_{\{y-y_0\}}(\nu k) \, \frac{P_0\{Y_0 = y_0, R(T)=k \, | \, O_p\}}{P_0\{R'(T) \in S_{y-Y_0} \, | \, O_p\}}.
%\]
Summing over $(z_0,k)$ we obtain 
\begin{equation}\label{eq-W-O}
\E_0[W_p \, | \, O] = \frac{P_0\{Y(T)=y \, | \, O_p\}}{P_0\{R'(T) \in S_{y-Y_0} \, | \, O_p\}}.
\end{equation}
Dividing \eqref{eq1x0kw} by \eqref{eq-W-O} we obtain
\[
\begin{aligned}
\frac{\E_0[1_{\{(z_0, k)\}}(\tilde{Z}_0, K) \, W_p \, | \, O]}{\E_0[W_p \, | \, O]} &= \frac{P_0\{Z_0=z_0, R(T)=k, O_y \, | \, O_p\}}{P_0\{O_y \, | \, O_p\}}\\
&= P_0\{ Z_0=z_0, R(T)=k \, | \, O\}.
\end{aligned}
\]

%We claim that
%\[
%\begin{aligned}
%P_0\{X_0=x_0, R(T)=k \, &| \, Y(T)-Y_0=y-y_0\}\\ 
%&= 1_{\{y-y_0\}}(\nu k) \, \frac{P_0\{X_0=x_0, R(T)=k\}}{P_0\{Y(T)-Y_0=y-y_0\}}.
%\end{aligned}
%\]
%To see this, note that if $y \neq y_0 + \nu k$ both sides are zero and if $y=y_0+\nu k$ then $\{R(T)=k\} \subset \{Y(T)-Y_0=y-y_0\}$. Finally, we note that $R$ and $Y_0$ are independent when conditioned on $Y(T)-Y_0$. Thus 
%\[
%P_0\{X_0=x_0, R(T)=k \, | \, O\} = P_0\{X_0=x_0, R(T)=k \, | \, Y(T)-Y_0=y-y_0\}.
%\]
%The result follows from the last three equalities. 
\end{proof}

\subsection*{B.3: Construction of the proxy processes $I,\tilde{Z}=(U,V)$}

Having constructed $\tilde{Z}_0=(U_0,V_0)$ and  $K$, we define the process $I:[0,T] \times \Omega \to \posint^m$ as follows. Conditioned on $\tilde{Z}_0=z_0$, we let $\eta_i(t) = \int_0^t \lambda(s,z_0,y) ds$ for $t \in [0,T]$. Note that $\eta_i$ are strictly increasing functions. 
Let $\xi^i_l$ for $i=1,2,\dots,m$ and $l=1,2,\dots$ be an i.i.d.\ collection 
of random variables uniformly distributed on $[0,\eta_i(T)]$ and independent of $O_p, Z_0, R, \tilde{Z}_0$ and $K$. 
Let $t^i_l = \eta_i^{-1}(\xi^i_l)$ 
Then we set
\[
I_i(t) = \sum_{l=1}^{K_i} 1_{[t^i_l,\infty)}(t).
\]
%We note thatto be an ``$m$-variate conditional Poisson bridge process'' under $P_0$ with intensity $\lambda$ such that $I(T)=K$ and conditioned on $K$, $I$ is independent of $R$.  This is accomplished by constructing i.i.d.\ random variables $t^i_l$ uniformly distributed in $[0,T]$ where $i=1,\dots,m$ and $l=1,\dots,K_i$, which are independent of $R$ when conditioned on $K$. 
The process $I$ is a proxy for $R$ and by construction $I(T)=K$. We define $\tilde{Z}=(U,V)$ by
\[
\tilde{Z}(t) = \tilde{Z}_0 + \nu I(t).
%\begin{aligned}
%U(t)&=U_0 + \nu' I(t),\\
%V(t)&=Y_0 + \nu'' I(t).\
%\end{aligned}
\]
%where $(U_0,V_0)$ are distributed like $(X_0,Y_0)$ 
%and independent of $(X_0,Y_0),t^i_l,R$ and $K$. We note that $U$ and $V$ are proxies for $X$ and $Y$.

Let us denote the restriction of $R$ to $[0,T]$ also by $R$. We note that the conditional law of $R$ given $R(T)=k,Z_0=z_0,O$ is the same as the conditional law of $I$ given $I(T)=K=k,\tilde{Z}_0=z_0, O$, since both will be ``inhomogeneous Poisson bridge'' processes with intensity $\lambda(t,z_0,y)$  under the said conditioning. We summarize this as the following proposition. 
\begin{proposition}\label{prop3}
Let $\phi:D_{\real^m}[0,T] \to [0,\infty)$ be a nonnegative measurable map. Then for $k \in \posint^m, z_0 \in \integ^{n}$, we have that
\[
\begin{aligned}
\E_0[\phi(R) \, &| \, R(T)=k, Z_0=z_0, O] %= \E_0[\phi(R) \, | \, R(T)=k]\\
%&= \E_0[\phi(I) \, | \, I(T)=k] 
= \E_0[\phi(I) \, | \, I(T)=k, \tilde{Z}_0=z_0, O].
\end{aligned}
\]
\end{proposition}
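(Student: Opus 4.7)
The plan is to reduce both sides to the same object, namely the law of an $m$-variate inhomogeneous Poisson bridge with intensity $\lambda(\cdot,z_0,y)$ on $[0,T]$ and endpoint $k$. First I would argue that the conditioning on $O = O_p \cap O_y$ is in effect redundant on both sides once $z_0$ and $k$ are fixed. For the left-hand side, $Y(T) = Y_0 + \nu'' R(T) = y_0 + \nu'' k$ is deterministic on the event $\{Z_0 = z_0, R(T)=k\}$, so $O_y$ is either automatic (when $y_0+\nu'' k=y$) or gives zero probability, in which case both sides vanish by convention. The same reasoning applies to the right-hand side because $V(T) = V_0 + \nu'' I(T) = y_0 + \nu'' k$ on $\{\tilde Z_0 = z_0, I(T)=k\}$. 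The further conditioning on $O_p$ is harmless: by the independence assumptions stated in the setup, the conditional law of $R$ given $Z_0$ does not depend on $O_p$, and $I$ is built from $\tilde Z_0$, $K$, and the uniforms $\xi^i_l$, all of which are independent of the lab process.

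Next I would identify the conditional law of $R$. Under $P_0$, conditioned on $O_p$ and $Z_0 = z_0$, $R$ is an $m$-variate inhomogeneous Poisson process with intensity $\lambda(\cdot, z_0, y)$. The classical characterization of Poisson bridges then gives that, conditional on $R(T) = k$ (and $Z_0 = z_0, O_p$), the jump times of the $i$-th coordinate are distributed as the order statistics of $k_i$ i.i.d.\ samples on $[0,T]$ with density $\lambda_i(\cdot,z_0,y)/\eta_i(T)$, independently across coordinates $i$. For $I$, the same distribution is obtained by direct construction: given $\tilde Z_0 = z_0$ and $K = k$, the jump times are $t^i_l = \eta_i^{-1}(\xi^i_l)$ with $\xi^i_l$ i.i.d.\ uniform on $[0,\eta_i(T)]$; the change of variables $u = \eta_i(t)$, together with $\eta_i'(t) = \lambda_i(t,z_0,y)$, shows that each $t^i_l$ has exactly density $\lambda_i(\cdot,z_0,y)/\eta_i(T)$ on $[0,T]$, and independence across $(i,l)$ transfers to independence of the coordinate processes.

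Since both conditional distributions coincide with the inhomogeneous Poisson bridge with the same intensity and same endpoint vector, taking $\E_0$ of any nonnegative measurable $\phi$ of the path yields the same value, which is the desired identity. The main obstacle is the careful bookkeeping around the compound conditioning on $O$: one must verify that neither $O_p$ nor $O_y$ injects information into the path of $R$ (or $I$) beyond what is already encoded in $(Z_0, R(T))$ (resp.\ $(\tilde Z_0, I(T))$). Once this reduction is in place, the rest is an application of the inhomogeneous Poisson bridge property, with a clean change-of-variables check for the construction of $I$.
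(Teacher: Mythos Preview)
Your proposal is correct and follows the same line as the paper: both conditional laws are identified with the inhomogeneous Poisson bridge of intensity $\lambda(\cdot,z_0,y)$ and endpoint $k$. The paper in fact gives no detailed proof---it simply asserts that ``both will be inhomogeneous Poisson bridge processes with intensity $\lambda(t,z_0,y)$ under the said conditioning''---so your argument is a fleshed-out version of the same idea, including the explicit order-statistics/change-of-variables verification for $I$ and the bookkeeping that $O_y$ is determined by $(z_0,k)$ and that the additional conditioning on $O_p$ and on the lab process does not affect the law of $I$ once $(\tilde Z_0,K)$ are fixed.
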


\begin{proposition}\label{prop4}
Let $\phi:D_{\real^m}[0,T] \times \integ^{n} \to [0,\infty)$ be a nonnegative measurable map. Then 
\[
\E_0[\phi(R,Z_0) \, | \, O] = \frac{\E_0[\phi(I,\tilde{Z}_0) \, W_p \, | \, O]}{E_0[W_p \, | \, O]}.
\]
\end{proposition}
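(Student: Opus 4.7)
The plan is to prove the identity by disintegrating the left-hand side over the values of $(Z_0, R(T))$, then applying Propositions~\ref{prop3} and \ref{prop2} to translate each piece into quantities involving the filter processes, and finally reassembling the result using the fact that $W$ is a function of $(\tilde{Z}_0, K)$.

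First I would condition on $(Z_0, R(T))$ inside the conditional expectation:
\[
\E_0[\phi(R,Z_0) \, | \, O] = \sum_{z_0 \in \posint^{n}, \, k \in \posint^{m}} \E_0[\phi(R,z_0) \, | \, Z_0=z_0, R(T)=k, O] \, P_0\{Z_0=z_0, R(T)=k \, | \, O\}.
\]
By Proposition~\ref{prop3}, the inner conditional expectation equals $\E_0[\phi(I,z_0) \, | \, \tilde{Z}_0=z_0, I(T)=k, O]$, and by Proposition~\ref{prop2} the probability equals $\E_0[1_{\{(z_0,k)\}}(\tilde{Z}_0, K) W \, | \, O]/\E_0[W \, | \, O]$. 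Substituting both yields
\[
\E_0[\phi(R,Z_0) \, | \, O] = \frac{1}{\E_0[W \, | \, O]} \sum_{z_0, k} \E_0[\phi(I,z_0) \, | \, \tilde{Z}_0=z_0, I(T)=k, O] \, \E_0[1_{\{(z_0,k)\}}(\tilde{Z}_0, K) W \, | \, O].
\]

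The final step is to recognize that this double sum collapses to $\E_0[\phi(I,\tilde{Z}_0) W \, | \, O]$. Two observations are crucial. First, by construction $K = I(T)$, so the conditioning event $\{\tilde{Z}_0=z_0, I(T)=k\}$ is the same as $\{\tilde{Z}_0=z_0, K=k\}$. Second, since $K'' = G(y-V_0, K')$ and $W = \rho(K'', U_0, V_0, y)$, the weight $W$ is $\sigma(\tilde{Z}_0, K)$-measurable; on the event $\{\tilde{Z}_0=z_0, K=k\}$ it takes the deterministic value $\rho(k'', x_0, y_0, y)$, which I denote $w(z_0,k)$. Therefore
\[
\E_0[1_{\{(z_0,k)\}}(\tilde{Z}_0, K) W \, | \, O] = w(z_0,k) \, P_0\{\tilde{Z}_0=z_0, K=k \, | \, O\},
\]
and pulling $w(z_0,k)$ inside the conditional expectation of $\phi(I,z_0)$ gives
\[
\E_0[\phi(I,z_0) \, | \, \tilde{Z}_0=z_0, K=k, O] \, \E_0[1_{\{(z_0,k)\}}(\tilde{Z}_0, K) W \, | \, O] = \E_0[\phi(I, \tilde{Z}_0) W \, 1_{\{(z_0,k)\}}(\tilde{Z}_0, K) \, | \, O].
\]
Summing over $(z_0,k)$ then produces $\E_0[\phi(I,\tilde{Z}_0) W \, | \, O]$, which completes the identity.

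The main obstacle is the bookkeeping in the final reassembly: one must confirm that $W$ depends on the filter processes only through $(\tilde{Z}_0, K)$ (which is immediate from the definition) and that the conditional law of $I$ given $\{\tilde{Z}_0=z_0, I(T)=k, O\}$ truly does not depend on the weight $W$, so that $W$ may be moved in and out of the conditional expectation freely. Both are handled by the explicit Poisson bridge construction of $I$ in Section~\ref{sec-rigorous}, so the argument reduces to the routine decomposition above.
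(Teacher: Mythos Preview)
Your proposal is correct and follows essentially the same route as the paper: disintegrate over $(Z_0,R(T))$, apply Propositions~\ref{prop3} and \ref{prop2}, and reassemble. The only minor difference is in the reassembly step: the paper invokes conditional independence of $I$ and $W$ given $\{K=k,\tilde{Z}_0=z_0,O\}$, whereas you use the sharper (and more direct) observation that $W$ is $\sigma(\tilde{Z}_0,K)$-measurable and hence equal to a constant $w(z_0,k)$ on that event; both justifications are valid and lead to the same conclusion.
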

\begin{proof}
\[
\begin{aligned}
&\E_0[\phi(R,Z_0) \, | \, O]\\ 
&= \sum_{z_0 \in \integ^{n}}  \sum_{k \in \posint^m} \E_0[\phi(R,z_0) \, | \, R(T)=k, Z_0=z_0, O] \, P_0\{R(T)=k, Z_0=z_0 \, | \, O\}\\
&=\sum_{k, z_0} \E_0[\phi(I,z_0) \, | \, K=k, \tilde{Z}_0=z_0, O] \, \frac{\E_0[1_{\{(z_0, k)\}}(\tilde{Z}_0, K) \, W_p \, | \, O]}{\E_0[W_p \, | \, O]},
\end{aligned}
\]
where in the second equality we have used Propositions \ref{prop2} and \ref{prop3}. 
Now 
\[
\begin{aligned}
&\frac{\E_0[1_{\{(z_0,k)\}}(\tilde{Z}_0, K) \, W_p \, | \, O]}{\E_0[W_p \, | \, O]} = \frac{\E_0[W_p; K=k, \tilde{Z}_0=z_0, O]}{\E_0[W_p; O]}\\
&= \frac{\E_0[W_p \, | \, K=k, \tilde{Z}_0=z_0, O] \, P_0\{K=k, \tilde{Z}_0=z_0, \, | \, O\}}{\E_0[W_p \, | \, O]}.
\end{aligned}
\]
Since the process $I$ and $W_p$ are independent when conditioned on $I(T)=k, \tilde{Z}_0=z_0, O$, 
\[
\begin{aligned}
&\E_0[\phi(I, z_0) \, | \, K=k, \tilde{Z}_0=z_0, O]\, \E_0[W_p \, | \, K=k, \tilde{Z}_0=z_0, O]\\ &= \E_0[\phi(I, z_0) \, W_p\, | \, K=k, \tilde{Z}_0=z_0, O].
\end{aligned}
\]
Hence
\[
\begin{aligned}
&\E_0[\phi(R,Z_0) \, | \, O] \\ 
&=\frac{\sum_{k, z_0} \E_0[\phi(I,z_0) \, W_p \, | \, K=k, \tilde{Z}_0=z_0, O] \, P_0\{K=k, \tilde{Z}_0=z_0\, | \,O\}}{E_0[W_p \, | \, O]}\\
&= \frac{\E_0[\phi(I,\tilde{Z}_0) \, W_p \, | \, O]}{E_0[W_p \, | \, O]}.\\
\end{aligned}
\]
\end{proof}

\subsection*{B.4: Girsanov change of measure}

Following \cite{bremaud}, we define the processes $L_j,\tilde{L}_j:[0,T] \times \Omega \to \real$ for $j=1,\dots,m$
by
\begin{equation}
\begin{aligned}
L_j(t) &= \left(\prod_{i \geq 1} \frac{a_j(Z(T^j_i-))}{\lambda_j(T^j_i-,Z_0,y)} 1_{\{T^j_i \leq t\}}\right) \, \exp \left( \int_0^t (\lambda_j(s, Z_0, y) - a_j(Z(s)) \, ds \right),\\
\tilde{L}_j(t) &= \left(\prod_{i \geq 1} \frac{a_j(\tilde{Z}(\tilde{T}^j_i-)}{\lambda_j(\tilde{T}^j_i-,\tilde{Z}_0,y)} 1_{\{\tilde{T}^j_i \leq t\}}\right) \, \exp \left( \int_0^t (\lambda_j(s,\tilde{Z}_0,y) - a_j(\tilde{Z}(s)) \, ds \right),\\
\end{aligned}
\end{equation}
where $T^j_i$ and $\tilde{T}^j_i$ for $i=1,2,\dots$ 
are the $i$th jump time of $R_j$ and $I_j$ respectively. 
Define the processes $L,\tilde{L}$ by
\begin{equation}
L(t) = L_1(t) \dots L_m(t), \quad \tilde{L}(t) = \tilde{L}_1(t) \dots \tilde{L}_m(t).    
\end{equation}

We note that for each $t \in [0,T]$, we can write $L(t)$ and $\tilde{L}(t)$ as 
%the same measurable maps $\psi_t$
%of $(R,Z_0)$ and $(I,\tilde{Z}_0)$ respectively:
\[
L(t) = \psi_t(R, Z_0) \text{ and } \tilde{L}(t) = \psi_t(I,\tilde{Z}_0),
\]
where $\psi_t:D_{\real^m}[0,T] \times \integ^{n} \to \real$ is 
a measurable map. 

Under certain integrability conditions we may assume that $\E_0(L(T))=1$. 
Let $\sF_t=\sigma(Z_0, R(s);0\leq s \leq t)$.
%and $\sG_t=\sigma(X_0, Y_0, Y(T), R(s);0\leq s \leq t)$.
It follows that $L$ 
is a $(\{\sF_t\}, P_0)$ martingale and $\E_0(L(t))=1$ for all $t \in [0,T]$. 
 We define the probability measure $P$ on $(\Omega,\sF)$ by 
$P=L(T)\,P_0$. It follows that %on $\sF_t$, $dP/dP_0=L(t)$ 
the $P$ intensity of $R$ is 
$a(Z_0+\nu R(t-))=a(Z(t-))$,   
%As this intensity is also $\sF_t$ predictable, it follows that the ($\{\sF_t\},P)$ intensity of $R$ is $a(X(t-),Y(t-))$ 
which is the desired intensity \cite{bremaud}.

\begin{theorem}
Let $\phi:D_{\real^m}[0,T] \times \posint^n \to \real$ be nonnegative measurable. Then
\[
\E[\phi(R, Z_0) \, | \, O] = \frac{\E_0[\phi(I, \tilde{Z}_0) \, \tilde{L}(T) \, W_p| \, O]}{\E_0[\tilde{L}(T) \, W_p| \, O]}
\]
\end{theorem}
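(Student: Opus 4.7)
The statement combines a Girsanov change of measure with the unweighted identity already proved in Proposition~\ref{prop4}. The plan is to first rewrite $\E[\phi(R,Z_0)\mid O]$ as a ratio of $P_0$-expectations using $dP = L(T)\,dP_0$, and then apply Proposition~\ref{prop4} twice --- once with the integrand $\phi \cdot \psi_T$ and once with just $\psi_T$ --- to convert everything from the lab process $(R,Z_0)$ to the filter process $(I,\tilde{Z}_0)$.

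\textbf{Step 1: Bayes/Girsanov step.} Since $L(T)$ is $\sF_T$-measurable with $\E_0[L(T)]=1$ and $O \in \sF_T$ (as $O_p \in \sY_p \subset \sF_T$ and $O_y = \{Y(T)=y\} \in \sF_T$), the relation $dP = L(T)\,dP_0$ gives, for any nonnegative $\sF_T$-measurable $F$,
\[
\E[F \mid O] \;=\; \frac{\E[F\, \mathbf{1}_O]}{P(O)} \;=\; \frac{\E_0[F\, L(T)\, \mathbf{1}_O]}{\E_0[L(T)\, \mathbf{1}_O]} \;=\; \frac{\E_0[F\, L(T)\mid O]}{\E_0[L(T)\mid O]}.
\]
Applied to $F = \phi(R,Z_0)$, and recalling that $L(T) = \psi_T(R,Z_0)$, this yields
\[
\E[\phi(R,Z_0)\mid O] \;=\; \frac{\E_0[\phi(R,Z_0)\,\psi_T(R,Z_0)\mid O]}{\E_0[\psi_T(R,Z_0)\mid O]}.
\]

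\textbf{Step 2: Two applications of Proposition~\ref{prop4}.} The map $(r,z_0) \mapsto \phi(r,z_0)\,\psi_T(r,z_0)$ is nonnegative and measurable from $D_{\real^m}[0,T]\times \integ^{n}$ to $[0,\infty)$, so Proposition~\ref{prop4} applies and gives
\[
\E_0[\phi(R,Z_0)\,\psi_T(R,Z_0)\mid O] \;=\; \frac{\E_0[\phi(I,\tilde{Z}_0)\,\psi_T(I,\tilde{Z}_0)\, W\mid O]}{\E_0[W\mid O]} \;=\; \frac{\E_0[\phi(I,\tilde{Z}_0)\,\tilde{L}(T)\, W\mid O]}{\E_0[W\mid O]},
\]
using $\tilde{L}(T)=\psi_T(I,\tilde{Z}_0)$. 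Similarly, taking $\phi\equiv 1$ in Proposition~\ref{prop4} (with integrand $\psi_T$) gives
\[
\E_0[\psi_T(R,Z_0)\mid O] \;=\; \frac{\E_0[\tilde{L}(T)\, W\mid O]}{\E_0[W\mid O]}.
\]
Dividing, the factors $\E_0[W\mid O]$ cancel and the desired identity falls out.

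\textbf{Obstacles and caveats.} The argument is essentially bookkeeping once the measurability of $\psi_T$ and the martingale property of $L$ are in hand; both have already been stated in the paper, with the martingale property noted under ``certain integrability conditions.'' The only genuine point to check is that $P(O)>0$ so the conditioning is well defined. This follows from Assumption~\ref{ass-feasible} together with the hypothesis $P_0(O_p)>0$ and the absolute continuity $P\ll P_0$ (on $\sF_T$), which transfers positivity of $P_0(O)$ to positivity of $P(O)$ provided $L(T)>0$ on $O$; this in turn is ensured by the strict positivity of the proposal intensity $\lambda$ and the fact that under $P$ the reaction counts only increment where $a_j>0$, so the ratios $a_j/\lambda_j$ in $L(T)$ are almost surely strictly positive along paths of positive $P$-probability.
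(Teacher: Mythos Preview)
Your proof is correct and follows essentially the same route as the paper: first use $dP=L(T)\,dP_0$ to write $\E[\phi(R,Z_0)\mid O]$ as a ratio of $P_0$-conditional expectations involving $L(T)=\psi_T(R,Z_0)$, then apply Proposition~\ref{prop4} to both numerator and denominator (with integrands $\phi\cdot\psi_T$ and $\psi_T$ respectively) to pass from $(R,Z_0)$ to $(I,\tilde{Z}_0)$, after which the $\E_0[W\mid O]$ factors cancel. One small remark on your caveat: absolute continuity $P\ll P_0$ by itself transfers nullity, not positivity, in the direction you need; your salvage via ``$L(T)>0$ on $O$'' is the right idea, but note that under $P_0$ the process $R$ is Poisson with strictly positive intensity and so may jump where $a_j=0$, giving $L(T)=0$ on a $P_0$-non-null set --- the cleaner justification for $P(O)>0$ is to invoke Assumption~\ref{ass-feasible} together with the Markov structure directly, as the paper implicitly does.
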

\begin{proof}
\[
\begin{aligned}
&\E[\phi(R, Z_0) \, | \, O] =  \frac{\E[\phi(R, Z_0); \, O]}{P\{O\}}\\
&= \frac{\E_0[\phi(R, Z_0) \, L(T) ; \, O]}{\E_0[L(T) \,; \, O]} = \frac{\E_0[\phi(R, Z_0) \, L(T) \, | \, O]}{\E_0[L(T) \,| \, O]}.
\end{aligned}
\]
Since on $O$, $L(T)=\psi_T(R,Z_0)$ and $\tilde{L}(T)=\psi_T(I,Z_0)$, by Proposition \ref{prop4}, it follows that 
\[
\E_0[\phi(R, Z_0) \, L(T) \, | \, O] = \frac{\E_0[\phi(I,\tilde{Z}_0) \, \tilde{L}(T) \, W_p| \, O]}{\E_0[ W_p| \, O]},
\]
and 
\[
\E_0[L(T) \, | \, O] = \frac{\E_0[\tilde{L}(T) \, W_p| \, O]}{\E_0[W_p| \, O]}.
\]
From the last three equations the result follows. 
\end{proof}

\iffalse
\begin{corollary}
Let $\phi:D_{\real^m}[0,T] \times \posint^n$ be measurable 
and $\phi(R, Z_0)$ be integrable. Then
\[
\E[\phi(R, Z_0) \, | \, O] = \frac{\E_0[\phi(I, \tilde{Z}_0) \, \tilde{L}(T) \, W_p| \, O]}{\E_0[\tilde{L}(T) \, W_p| \, O]}
\]
\end{corollary}
\fi

\begin{corollary}
Let $z_0 \in \posint^{n}, k \in \posint^m$. Then for $t \in [0,T]$
\[
\E[1_{\{(z_0,k)\}}(Z_0,R(t)) \, | \, O] = \frac{\E_0[1_{\{(z_0,k)\}}(\tilde{Z}_0,I(t)) \, \tilde{L}(T) \, W_p| \, O]}{\E_0[\tilde{L}(T) \, W_p| \, O]}
\]
\end{corollary}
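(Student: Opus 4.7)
The plan is to recognize the corollary as an immediate specialization of the preceding theorem, applied to a carefully chosen path functional $\phi$. The theorem already states the Girsanov-weighted change-of-measure identity
\[
\E[\phi(R,Z_0)\,|\,O] = \frac{\E_0[\phi(I,\tilde Z_0)\,\tilde L(T)\,W\,|\,O]}{\E_0[\tilde L(T)\,W\,|\,O]}
\]
for any nonnegative measurable $\phi\colon D_{\real^m}[0,T]\times\posint^n\to\real$, so the only task is to produce the right $\phi$ and check that it meets the hypotheses.

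Concretely, I would fix $t\in[0,T]$, $z_0\in\posint^n$, $k\in\posint^m$ and define
\[
\phi(r,z) \;=\; 1_{\{z_0\}}(z)\cdot 1_{\{k\}}(r(t)).
\]
This is obviously nonnegative and bounded by $1$. For measurability, one uses that the coordinate evaluation $r\mapsto r(t)$ is measurable on $D_{\real^m}[0,T]$ with respect to the standard (Skorohod / cylindrical) $\sigma$-algebra, that $\{k\}\subset\posint^m$ and $\{z_0\}\subset\posint^n$ are measurable (indeed, singletons in a countable discrete space), and that a product of two measurable indicator functions is measurable. Thus $\phi$ is a legitimate test function for the theorem.

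With this choice, $\phi(R,Z_0)=1_{\{(z_0,k)\}}(Z_0,R(t))$ and $\phi(I,\tilde Z_0)=1_{\{(z_0,k)\}}(\tilde Z_0,I(t))$, so substituting into the theorem yields exactly the claimed identity. No further estimates or auxiliary lemmas are needed, and there is essentially no obstacle: the only point requiring a brief word is the measurability of the evaluation map $r\mapsto r(t)$ on the path space, which is standard for cadlag processes. Hence the corollary follows in one line from the theorem together with this specific choice of $\phi$.
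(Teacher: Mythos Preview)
Your proposal is correct and is exactly the intended argument: the paper states this corollary without proof immediately after the theorem, and applying the theorem with $\phi(r,z)=1_{\{z_0\}}(z)\,1_{\{k\}}(r(t))$ is precisely the one-line specialization that yields it.
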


\begin{corollary}
For $z \in \posint^n$ and $t \in [0,T]$
\[
\E[1_{\{z\}}(Z(t)) \, | \, O] = \frac{\E_0[1_{\{z\}}(\tilde{Z}(t)) \, \tilde{L}(T) \, W_p| \, O]}{\E_0[\tilde{L}(T) \, W_p| \, O]}
\]
\end{corollary}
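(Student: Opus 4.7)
The plan is to derive this as an immediate specialization of the preceding theorem (or, equivalently, of the corollary just before it) by choosing the path functional appropriately. I would define the measurable map $\phi : D_{\real^m}[0,T] \times \posint^n \to [0,\infty)$ by
\[
\phi(r, z_0) = 1_{\{z\}}\bigl(z_0 + \nu\, r(t)\bigr).
\]
Since $Z(t) = Z_0 + \nu R(t)$ and $\tilde{Z}(t) = \tilde{Z}_0 + \nu I(t)$ by construction of the filter process in Subsection 4.3, this gives $\phi(R, Z_0) = 1_{\{z\}}(Z(t))$ and $\phi(I, \tilde{Z}_0) = 1_{\{z\}}(\tilde{Z}(t))$. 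Plugging into the theorem then yields the stated identity directly.

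The only point requiring any care is the measurability of $\phi$. The map $r \mapsto r(t)$ on $D_{\real^m}[0,T]$ is measurable with respect to the $\sigma$-algebra generated by coordinate projections (which is the standard one being used implicitly when the authors write ``measurable map'' on Skorohod space), the map $(r(t), z_0) \mapsto z_0 + \nu r(t)$ is affine and hence measurable, and composition with the indicator $1_{\{z\}}$ of a singleton preserves measurability. Nonnegativity is immediate because $\phi$ takes values in $\{0,1\}$. So the hypotheses of the theorem are satisfied.

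An alternative, essentially equivalent, route is to start from the preceding corollary (the one about joint indicators $1_{\{(z_0, k)\}}(Z_0, R(t))$) and sum. Since $\{Z(t) = z\} = \bigcup_{(z_0, k):\, z_0 + \nu k = z} \{Z_0 = z_0, R(t) = k\}$ as a disjoint union (and similarly for $\tilde{Z}(t)$ in terms of $\tilde{Z}_0, I(t)$), one writes
\[
1_{\{z\}}(Z(t)) = \sum_{(z_0, k):\, z_0 + \nu k = z} 1_{\{(z_0, k)\}}(Z_0, R(t)),
\]
applies the preceding corollary termwise, and pulls the finite-or-countable sum through the linear conditional expectation on both numerator and denominator (the denominator is a constant in the sum and comes out). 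Both approaches are short; I would prefer the $\phi$-choice route because it invokes the strongest available result and avoids any bookkeeping with the decomposition. There is no substantive obstacle here — the content of the statement is entirely subsumed by the theorem, and the corollary is essentially a reformulation in terms of the observable state $Z(t)$ rather than the pair $(Z_0, R(t))$.
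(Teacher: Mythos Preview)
Your proposal is correct and matches the paper's intended argument: the paper states this corollary without proof, immediately after the theorem and the first corollary, so it is meant to follow by exactly the specialization $\phi(r,z_0)=1_{\{z\}}(z_0+\nu\,r(t))$ that you describe. Your remarks on measurability and the alternative summation route are accurate and more detailed than anything the paper supplies.
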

%-------------------------------------------
%\section{Tuning the Targeting Algorithm} \label{sec-tuning}
%Within the implementation of our targeting algorithm, there are choices we could make such as the intensity of the inhomogeneous Poisson process $\lambda(t)$, the frequency to resample the filter. In this section, we discuss these factors and also present a ``two-stage method" that runs (unconditional) forward evolution in the first portion of the time interval then applies the targeting algorithm in the second portion.

%\textbf{Filter sample size}
%Figure \ref[tab-filter-sample-size}.

%\textbf{Choosing inhomogeneous Poisson intensities $\lambda$}: 
%\textbf{Resampling: } 
%\textbf{Two-stage method: }
%\newpage
%% If you have bibdatabase file and want bibtex to generate the
%% bibitems, please use
\bibliographystyle{elsarticle-num} 
\bibliography{cas-refs}

%% else use the following coding to input the bibitems directly in the
%% TeX file.

% \begin{thebibliography}{00}

% %% \bibitem{label}
% %% Text of bibliographic item

% \bibitem{}

% \end{thebibliography}
\end{document}